\DeclareFontFamily{U}{mathb}{\hyphenchar\font45}
\DeclareFontShape{U}{mathb}{m}{n}{
      <5> <6> <7> <8> <9> <10> gen * mathb
      <10.95> mathb10 <12> <14.4> <17.28> <20.74> <24.88> mathb12
}{}
\DeclareSymbolFont{mathb}{U}{mathb}{m}{n}
\DeclareMathSymbol{\llcurly}{3}{mathb}{"CE}
\DeclareMathSymbol{\ggcurly}{\mathrel}{mathb}{"CF}
\begin{document}

\let\kappa=\varkappa
\let\eps=\varepsilon
\let\phi=\varphi
\let\p\partial
\let\lle=\preccurlyeq
\let\ulle=\curlyeqprec

\def\Z{\mathbb Z}
\def\R{\mathbb R}
\def\C{\mathbb C}
\def\Q{\mathbb Q}
\def\P{\mathbb P}
\def\HH{\mathsf{H}}
\def\XX{\mathcal{X}}

\def\bbk{\mathbbm{k}}

\def\conj{\overline}
\def\Beta{\mathrm{B}}
\def\const{\mathrm{const}}
\def\ov{\overline}
\def\wt{\widetilde}
\def\wh{\widehat}

\renewcommand{\Im}{\mathop{\mathrm{Im}}\nolimits}
\renewcommand{\Re}{\mathop{\mathrm{Re}}\nolimits}
\newcommand{\codim}{\mathop{\mathrm{codim}}\nolimits}
\newcommand{\Aut}{\mathop{\mathrm{Aut}}\nolimits}
\newcommand{\lk}{\mathop{\mathrm{lk}}\nolimits}
\newcommand{\sign}{\mathop{\mathrm{sign}}\nolimits}
\newcommand{\rk}{\mathop{\mathrm{rk}}\nolimits}

\def\id{\mathrm{id}}
\def\Leg{\mathrm{Leg}}
\def\Jet{{\mathcal J}}
\def\sS{{\mathcal S}}
\def\lcan{\lambda_{\mathrm{can}}}
\def\ocan{\omega_{\mathrm{can}}}
\def\bgamma{\boldsymbol{\gamma}}

\renewcommand{\mod}{\mathrel{\mathrm{mod}}}

\newtheorem{mainthm}{Theorem}
\renewcommand{\themainthm}{{\Alph{mainthm}}}
\newtheorem{thm}{Theorem}[section]
\newtheorem{lem}[thm]{Lemma}
\newtheorem{prop}[thm]{Proposition}
\newtheorem{cor}[thm]{Corollary}

\theoremstyle{definition}
\newtheorem{exm}[thm]{Example}
\newtheorem{rem}[thm]{Remark}
\newtheorem{df}[thm]{Definition}
\newtheorem{que}[thm]{Question}
\newtheorem{conje}[thm]{Conjecture}

\numberwithin{equation}{section}
\newcommand{\aftersubsec}{\hfill\nopagebreak\par\smallskip\noindent}

\title{Legendrian links and d\'ej\`a vu moments}
\author[Nemirovski]{Stefan Nemirovski}
\thanks{This work was partially supported by the SFB TRR 191 {\it Symplectic Structures in Geometry, Algebra and
Dynamics}, funded by the DFG (Projektnummer 281071066~--~TRR 191).}
\address{%
Steklov Mathematical Institute, Gubkina 8, 119991 Moscow, Russia;\hfill\break
\phantom{\& }Fakult\"at f\"ur Mathematik, Ruhr-Universit\"at Bochum, 44780 Bochum, \phantom{\& }Germany}
\email{stefan@mi-ras.ru}

\begin{abstract}
A Legendrian link is called a d\'ej\`a vu link if its components can be connected
by a positive Legendrian isotopy but this isotopy cannot be embedded. This is the
contact geometric analogue of a pair of events in a spacetime such that there are
d\'ej\`a vu moments on every future-directed timelike path between them. 
We construct d\'ej\`a vu links in several geometrically relevant situations
and discuss their basic properties.
\end{abstract}

\maketitle

\section{Introduction and overview}
Let $(Y,\xi)$ be a contact manifold with a co-oriented contact structure.
An isotopy of Legendrian submanifolds $\iota:L\times[0,1]\to Y$
is said to be {\it embedded\/} if the map $\iota$ is an embedding 
and {\it positive\/} if the trajectories of points on $L$ 
are positively transverse to the co-oriented contact distribution, 
see \S\ref{LegIso} for a discussion of different types of 
Legendrian isotopies. Every positive isotopy is embedded locally. 
A basic example of a positive isotopy is given by the action of the Reeb flow 
of a contact form defining~$\xi$. This isotopy is embedded if 
there are no Reeb chords of action $\le 1$ for $L_0=\iota(L\times\{0\})$.

For the purposes of this paper, a {\it Legendrian link\/} is 
an ordered pair $(\Lambda_1,\Lambda_2)$ of disjoint closed 
connected Legendrian submanifolds in~$Y$.

\begin{df}
A Legendrian link $(\Lambda_1,\Lambda_2)$ is called a {\it d\'ej\`a vu link\/} if
it satisfies the following two conditions:
\begin{itemize}
\item[(P)] There is a positive isotopy connecting $\Lambda_1$ to $\Lambda_2$. 
\item[(DjV)] There is {\it no\/} embedded positive isotopy connecting $\Lambda_1$ to $\Lambda_2$.
\end{itemize}
\end{df}

The definition and terminology are motivated by the connection with
Lorentz geometry recalled in \S\ref{spacetimes}. The {\it sky\/}  
(or {\it celestial sphere\/}) of a point in a reasonable spacetime~$\XX$
is a Legendrian sphere in the contact manifold of light rays of~$\XX$.
Future-directed timelike curves in $\XX$ induce positive Legendrian
isotopies of skies. If the skies
of two points in $\XX$ form a d\'ej\`a vu link, then every future-directed
timelike curve connecting these points contains {\it d\'ej\`a vu moments}, 
that is, pairs of distinct points lying on the same light ray, 
see \S\ref{djvLorentz}. 

If the Legendrian isotopy class of the components $\Lambda_1$ and $\Lambda_2$ is fixed, 
there is only one Legendrian isotopy class of {\it non\/} d\'ej\`a vu links satisfying
condition~(P) by Corollary~\ref{NonDjV}. Legendrian links satisfying~(P)
and not smoothly isotopic to links in this `primitive' class are clearly d\'ej\`a vu. 
Proposition~\ref{DjVS1Top} shows that this observation can lead to 
a topological description of d\'ej\`a vu links in certain cases 
but being d\'ej\`a vu is not a topological property in general, 
see Example~\ref{SmVsLeg}. 

The components of a d\'ej\`a vu link may sometimes be connected 
by an embedded Legendrian isotopy which is not positive, see \S\S\ref{yxl} 
and~\ref{DjVS1}. It seems unlikely, however, that this could happen
if the Legendrian isotopy class of the components of the link
is orderable. A partial result in this direction is obtained 
in Proposition~\ref{DejavuOrdNoEmb} based on a partial answer
to Question~\ref{QMonNonneg}. 

D\'ej\`a vu links with components in the most basic orderable Legendrian
isotopy class of the zero section of a one-jet bundle are constructed
in~\S\ref{EmbJet} using generating function methods recalled briefly 
in~\S\ref{QIF}. It is a simple example of a problem in which one must use 
the `non-persistent' finite bars of the barcode of a generating 
function rather than the infinite bars and the associated spectral
invariants.

\section{Legendrian submanifolds}

\subsection{Legendrian isotopies}
\label{LegIso}
\aftersubsec
A parametrised Legendrian isotopy in a contact manifold $(Y,\xi)$
is a smooth map
$$
\iota:L\times [0,1] \longrightarrow Y
$$
such that $\iota|_{L\times\{t\}}:L\times\{t\} \hookrightarrow Y$ is an embedding
and the submanifold $\Lambda_t=\iota(L\times\{t\})$ is Legendrian for all $t\in [0,1]$.
Two parametrised isotopies are equivalent if they differ by 
a diffeomorphism $\phi$ of $L\times [0,1]$ such that $\phi(L\times \{t\})=L\times \{t\}$
for all $t\in [0,1]$. A Legendrian isotopy is an equivalence class of parametrised 
Legendrian isotopies. 

By the Legendrian isotopy extension theorem~\cite[Theorem~2.6.2]{Ge},
a~Legendrian isotopy of closed Legendrian submanifolds can be extended
to a compactly supported contact isotopy of the ambient contact
manifold. In particular, Legendrian isotopic Legendrian links are ambiently
contactomorphic.

Let us assume henceforth that Legendrian submanifolds are closed 
(i.e.\ compact and without boundary) and connected.

A Legendrian isotopy parametrised by $\iota:L\times [0,1] \longrightarrow Y$ 
in a co-oriented contact manifold $(Y,\xi=\ker\alpha)$ is called 
{\it non-negative\/} if $\iota^*\alpha(\frac{\p}{\p t})\ge 0$ on $L\times [0,1]$.
If the inequality is everywhere strict, the isotopy is called {\it positive}.
Both properties do not depend on the parametrisation and on the choice
of a contact form defining the co-oriented contact structure;
they are also invariant under (co-orientation preserving) contactomorphisms.

We write $\Lambda\lle\Lambda'$ if there is a non-negative Legendrian isotopy 
from $\Lambda$ to $\Lambda'$ and $\Lambda\llcurly\Lambda'$ if
there is a positive one. The relation $\lle$ is clearly reflexive and transitive;
if it is also antisymmetric on a Legendrian isotopy class $\mathcal{L}$,
then this class is called {\it orderable}. By~\cite[Proposition 4.7]{ChNe3},
$\mathcal{L}$ is orderable if and only if it does not contain a positive 
Legendrian loop, i.e.\ if and only if $\llcurly$ is {\it not\/} reflexive on~$\mathcal{L}$.

A Legendrian isotopy $\{\Lambda_t\}_{t\in [0,1]}$ is called $\lle$-{\it monotone\/}
if $\Lambda_{t_1}\lle \Lambda_{t_2}$ for all $0\le t_1<t_2\le 1$. 

\begin{lem}
\label{EmbIncrease}
A Legendrian isotopy $\{\Lambda_t\}_{t\in [0,1]}$ such that $\Lambda_0\lle \Lambda_1$ 
and $\Lambda_{t_1}\cap\Lambda_{t_2}=\varnothing$ for all $t_1\ne t_2$
is $\lle$-monotone.
\end{lem}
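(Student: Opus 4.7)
My plan is to reduce to proving that the given isotopy itself is non-negative. Let $f(p,t):=\iota^*\alpha(\partial/\partial t)$ on $L\times[0,1]$; since each slice $\Lambda_t$ is Legendrian one has $\iota^*\alpha = f\,dt$, and the isotopy is non-negative precisely when $f\ge 0$ everywhere. If this pointwise inequality holds, restricting $\iota$ to any sub-interval $[t_1,t_2]$ yields a non-negative Legendrian isotopy realising $\Lambda_{t_1}\lle\Lambda_{t_2}$, so the isotopy is $\lle$-monotone.

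To establish $f\ge 0$ I would argue by contradiction. Suppose $f(p_0,t_0)<0$. Working in a Weinstein neighbourhood of $\Lambda_{t_0}$, the nearby time slices $\Lambda_t$ become graphs of functions $h_t$ on $\Lambda_{t_0}$ with $h_{t_0}\equiv 0$, and one has $\partial_t h_t(p_0)|_{t=t_0} = f(p_0,t_0) < 0$. The disjointness of time slices translates into the condition that $h_s-h_t$ has no critical zero for $s\ne t$ near $t_0$, i.e.\ whenever the graphs of $h_s$ and $h_t$ meet they cross transversely. Locally this says $h_t(p_0)$ decreases strictly from $0$ as $t$ increases past $t_0$, while the wavefronts continue to be disjoint.

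The main difficulty is to upgrade the local decrease of $h_t(p_0)$ at $t=t_0$ into a global contradiction with $\Lambda_0\lle\Lambda_1$: the latter hypothesis asserts the existence of \emph{some} non-negative isotopy between the endpoints and does not a priori constrain the given $\iota$ pointwise, so bridging the gap is where the real content of the lemma lies. I would look for a generating function argument in the spirit of \S\ref{QIF}, where $\Lambda_0\lle\Lambda_1$ is rephrased as a non-negativity condition on critical values of a difference of generating functions and embeddedness of $\iota$ forces those critical values to evolve monotonically in $t$; combining the two should force $f\ge 0$ everywhere and yield $\lle$-monotonicity. Alternatively, one might extend $\iota$ to a contact isotopy via the Legendrian isotopy extension theorem and exploit the rigidity of the embedded tube $N=\iota(L\times[0,1])$ to couple the sign of the contact Hamiltonian along $N$ to the order relation at the boundary.
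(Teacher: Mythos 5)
There is a genuine gap, and in fact the overall strategy cannot work. You reduce the lemma to showing that the \emph{given} isotopy is non-negative, i.e.\ that $f=\iota^*\alpha(\tfrac{\p}{\p t})\ge 0$ everywhere. But $\lle$-monotonicity only asks that for each pair $t_1<t_2$ there exist \emph{some} non-negative isotopy from $\Lambda_{t_1}$ to $\Lambda_{t_2}$; it does not ask that the restriction of $\iota$ itself be one. The two notions are genuinely different: the paper states immediately after Lemma~\ref{EmbIncrease} that $\lle$-monotone isotopies need not be non-negative, illustrates this in Example~\ref{EmbNonOrd}, and records the question of whether the implication holds even in orderable classes as the open Question~\ref{QMonNonneg}. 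Concretely, your intermediate claim $f\ge 0$ is \emph{false} under the hypotheses of the lemma: in $Y=T^*L\times S^1$ of \S\ref{DjVS1}, the projection of the linear isotopy $\{j^1(tf)\}_{t\in[0,1]}$ with $f$ sign-changing and $0$ a regular value of $f$ has pairwise disjoint slices and satisfies $\pi(\mathrm{O})\llcurly\pi(j^1(f))$, yet it is not non-negative. So no amount of work with generating functions or contact Hamiltonians will close the gap you flag in your last paragraph, because the statement you are trying to force is not a consequence of the hypotheses.

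The intended argument is much softer and uses the disjointness hypothesis globally rather than infinitesimally. Since $\Lambda_{s}\cap\Lambda_{s'}=\varnothing$ for $s\ne s'$, each pair $(\Lambda_{t_1},\Lambda_{t_2})$ with $t_1<t_2$ is a Legendrian \emph{link}, and moving the pair of parameters from $(0,1)$ to $(t_1,t_2)$ keeps the two components disjoint, so all these links are Legendrian isotopic. By the Legendrian isotopy extension theorem they are therefore ambiently contactomorphic, and since the relation $\lle$ is invariant under (co-orientation preserving) contactomorphisms, $\Lambda_0\lle\Lambda_1$ transfers to $\Lambda_{t_1}\lle\Lambda_{t_2}$. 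The non-negative isotopy witnessing $\Lambda_{t_1}\lle\Lambda_{t_2}$ is thus the image of the one witnessing $\Lambda_0\lle\Lambda_1$ under a contactomorphism, and has nothing to do with the restriction of $\iota$ to $[t_1,t_2]$.
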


\begin{proof}
The Legendrian links $(\Lambda_{t_1},\Lambda_{t_2})$ are Legendrian
isotopic and hence contactomorphic for all $0\le t_1<t_2\le 1$. 
The result follows because $\lle$ is preserved by contactomorphisms.
\end{proof}

\begin{rem}
Recall from \cite[Lemma 2.2]{ChNe2} that $\llcurly$ is equivalent to $\lle$ for disjoint $\Lambda_t$'s.
Hence, the isotopy in the lemma is in fact $\llcurly$-monotone.
\end{rem}

A non-negative isotopy is obviously $\lle$-monotone but the converse
is not true in general.

\begin{exm}
\label{EmbNonOrd}
Let $\mathcal{L}$ be a {\it non\/}orderable Legendrian isotopy class
(e.g.\ any class containing a {\it loose\/} Legendrian~\cite{Li}
or any class in a contact manifold admitting a periodic Reeb flow). 
Then there is a positive Legendrian loop based at every $\Lambda\in\mathcal{L}$, 
which implies that $\Lambda\llcurly\Lambda'$
for every $\Lambda'$ sufficiently $C^1$-close to $\Lambda$,
cf.\ \cite[Proof of Corollary 8.1]{ChNe2}. If now $\{\Lambda_t\}_{t\in [0,1]}$
is any Legendrian isotopy in $\mathcal{L}$ such that $\Lambda_t$ are pairwise
disjoint, then it is $\llcurly$-monotone by the proof of Lemma~\ref{EmbIncrease}.
\end{exm}

\begin{que}
\label{QMonNonneg}
Are $\lle$-monotone isotopies non-negative in every {\it orderable\/} Legendrian isotopy class?
\end{que}

For the Legendrian isotopy class of the zero section of the $1$-jet bundle~$\Jet^1(L)$ 
of a closed manifold~$L$, the positive answer to the above question 
follows easily from~\cite[Corollary 5.4]{ChNe1}, 
which is essentially equivalent to the orderability 
of that class~\cite[Corollary 5.5]{ChNe1}. There is 
another case in which we are now going to show that $\lle$-monotone 
Legendrian isotopies are non-negative.

\begin{lem}
\label{SphMonNonneg}
Let $\mathcal{L}$ be an orderable Legendrian isotopy class of spheres.
A $\lle$-monotone Legendrian isotopy $\{\Lambda_t\}_{t\in [0,1]}\subset\mathcal{L}$  
is non-negative.
\end{lem}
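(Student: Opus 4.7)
Argue by contradiction: assume $\{\Lambda_t\}$ is $\lle$-monotone but not non-negative, so that $\alpha(\dot\iota)(p_0, t_0) < 0$ at some $(p_0, t_0) \in L \times [0,1]$.

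First, I would localize via a Weinstein tubular neighborhood $\mathcal{U}$ of the Legendrian sphere $\Lambda_{t_0}$, which is contactomorphic to a neighborhood of the zero section in $\Jet^1(\Lambda_{t_0}) \cong \Jet^1(S^n)$. For $t$ sufficiently close to $t_0$, the Legendrian $\Lambda_t\subset \mathcal{U}$ is the $1$-jet graph $\Lambda_{f_t}$ of a smooth function $f_t : S^n \to \R$ with $f_{t_0} \equiv 0$, and in this description $\alpha(\dot\iota)(p, t)$ coincides with $\p_t f_t(p)$. The hypothesis gives $\p_t f_t(p_0)|_{t=t_0} < 0$, so there exists $t_1$ slightly larger than $t_0$ with $f_{t_1}(p_0) < 0$ and $\Lambda_{t_1}\subset \mathcal{U}$.

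Next, by $\lle$-monotonicity we have $\Lambda_{t_0} \lle \Lambda_{t_1}$ in $\mathcal{L}$. In the local model $\Lambda_{t_0}$ is the zero section and $\Lambda_{t_1} = \Lambda_{f_{t_1}}$ with $f_{t_1}(p_0) < 0$; by the positive answer to Question~\ref{QMonNonneg} for the zero-section class in $\Jet^1(S^n)$ (i.e.\ \cite[Cor.~5.4]{ChNe1}), no non-negative Legendrian isotopy \emph{inside} $\Jet^1(S^n)$ can connect these two Legendrians. The proof therefore reduces to showing that the non-negative witness isotopy provided by $\Lambda_{t_0} \lle \Lambda_{t_1}$ in $\mathcal{L}$ can be arranged to lie inside $\mathcal{U}$.

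The main obstacle is this localization step, which is where orderability of $\mathcal{L}$ enters essentially. The idea I would pursue is: if every non-negative witness must leave $\mathcal{U}$, one combines it with the given isotopy restricted to $[t_0, t_1]$ (reversed), and if need be with Reeb translations inside $\mathcal{U}$, to assemble a non-constant non-negative Legendrian loop in $\mathcal{L}$ based at $\Lambda_{t_0}$. Antisymmetry of $\lle$ on an orderable class rules out such loops---any point $\gamma(s)$ on a non-negative loop $\gamma$ satisfies $\Lambda_{t_0}\lle \gamma(s)\lle\Lambda_{t_0}$ and hence equals $\Lambda_{t_0}$---yielding the required contradiction.
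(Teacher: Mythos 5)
Your opening localization is the same as the paper's: pass to a Weinstein neighbourhood $\mathcal{U}\cong\Jet^1(S^n)$ of $\Lambda_{t_0}$, obtain $f_{t_1}$ negative somewhere with $\Lambda_{t_0}\lle\Lambda_{t_1}$, and note that \cite[Corollary 5.4]{ChNe1} forbids this \emph{inside} the jet bundle. But the step you yourself identify as the crux --- confining some non-negative witness of $\Lambda_{t_0}\lle\Lambda_{t_1}$ to $\mathcal{U}$ --- is a genuine gap, and the loop construction you propose to close it does not work. The witness runs non-negatively from $\Lambda_{t_0}$ to $\Lambda_{t_1}$, but the segment of the given isotopy from $\Lambda_{t_1}$ back to $\Lambda_{t_0}$, traversed in reverse, is merely a Legendrian isotopy with no sign control (the given isotopy is only assumed $\lle$-monotone, and reversal destroys even that); concatenating the two therefore yields a loop that is not non-negative, and Reeb translations, which only push in the positive direction, cannot repair the negative part. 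So no non-negative loop is produced and antisymmetry is never contradicted. A structural warning sign: your argument nowhere uses that $\Lambda$ is a sphere in an essential way, so if the localization step could be carried out it would answer Question~\ref{QMonNonneg} affirmatively for \emph{every} orderable class --- which the paper explicitly leaves open, and which the remark on irreversible Legendrian links suggests is genuinely harder when $\Lambda$ is not a homology sphere.

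The paper's proof avoids localizing any non-negative isotopy. It replaces $f_{t_1}$ by a function $F\ge f_{t_1}$ with $\{F<0\}$ a ball, $0$ a regular value, and $j^1(F)$ inside the neighbourhood, so that $\Lambda_{t_0}\lle\Psi^{-1}(j^1(F))$. The sphere hypothesis enters here: for $\Lambda\cong S^n$ both $F$ and $-F$ satisfy these conditions and the space of such functions is connected, so the links $(\mathrm{O},j^1(F))$ and $(\mathrm{O},j^1(-F))$ are Legendrian isotopic, and the latter is carried to $(j^1(F),\mathrm{O})$ by the fibrewise shift $(q,p,u)\mapsto(q,p+s\,\partial F/\partial q,\,u+sF(q))$. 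Since $\lle$ is preserved under Legendrian isotopy of links (Lemma~\ref{EmbIncrease}), $\Lambda_{t_0}\lle\Psi^{-1}(j^1(F))$ forces $\Psi^{-1}(j^1(F))\lle\Lambda_{t_0}$ as well, and antisymmetry of $\lle$ on the orderable class $\mathcal{L}$ gives the contradiction, the two Legendrians being disjoint. In short, the missing idea is the reversibility of the link $(\mathrm{O},j^1(F))$, not a confinement of the witness isotopy.
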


\begin{proof}
Suppose that the isotopy is not non-negative at some $\tau\in [0,1]$ and denote $\Lambda=\Lambda_\tau$.
Fix a contactomorphism $\Psi$ from a neighbourhood $U\supset\Lambda$ to a tubular neighbourhood of the zero section 
in $\Jet^1(\Lambda)$ mapping $\Lambda$ onto the zero section~$\mathrm{O}$, see~\cite[Example 2.5.11]{Ge}. 
For $t$ close enough to $\tau$, the Legendrian
$\Lambda_t$ corresponds to the graph of the $1$-jet of a smooth function $f_t:\Lambda\to\R$.
We may now assume that $f_t$ is $C^1$-small and negative somewhere on $\Lambda$
but $\Lambda\lle \Lambda_t= \Psi^{-1}(j^1(f_t))$. 

Let $F$ be a smooth function on $\Lambda$ such that 
\begin{itemize}
\item[a)] $F\ge f_t$;
\item[b)] $\{F<0\}$ is a ball in $\Lambda$;
\item[c)] zero is not a critical value of $F$ (i.e.\ $j^1(F)\cap \mathrm{O}=\varnothing$);
\item[d)] $j^1(F)\subset \Psi(U)$.
\end{itemize}
$F$ may be defined as a regularised maximum (see \cite[Lemma I.5.18]{De}) 
of $f_t$ and a $C^1$-small function $\phi$ such that $\{\phi\le 0\}$ is a closed ball 
contained in~$\{f_t<0\}\ne\varnothing$.

Property (a) implies that $\Lambda\lle \Psi^{-1}(j^1(f_t))\lle \Psi^{-1}(j^1(F))$. 
The space of functions satisfying (b)--(d) is connected for any $\Lambda$. 
If $\Lambda\cong S^n$, the function $-F$ satisfies (b)--(d) too.
Hence, the links $(\mathrm{O},j^1(F))$ and $(\mathrm{O},j^1(-F))$
are Legendrian isotopic in $\Psi(U)$. The latter link is Legendrian
isotopic to $(j^1(F),\mathrm{O})$ via the `shift' contact isotopy
$$
(q,p,u)\mapsto \left(q,p+s\tfrac{\p F}{\p q}, u+sF(q)\right), \quad s\in [0,1].
$$
Thus, $(\Lambda,\Psi^{-1}(j^1(F)))$ is Legendrian isotopic to $(\Psi^{-1}(j^1(F)),\Lambda)$
and therefore we have both $\Lambda\lle \Psi^{-1}(j^1(F))$ and $\Psi^{-1}(j^1(F))\lle\Lambda$, 
which contradicts the assumption that $\mathcal{L}$ is orderable.
\end{proof}

\begin{rem}[Irreversible Legendrian links]
The key point in the above proof will not work 
if $\Lambda$ is not a homology sphere, i.e.\ if there
exists a non-zero homology class $\beta\in\HH_k(\Lambda;\bbk)$
of degree $k\ne 0, \dim\Lambda$. Namely, the links
$(\mathrm{O},j^1(F))$ and $(j^1(F),\mathrm{O})$
will {\it not\/} be Legendrian isotopic in $\Jet^1(\Lambda)$
for any function $F$ satisfying (b) and~(c).
To see this, observe that if $S_F$ is any 
quadratic at infinity generating function for $j^1(F)$
(see \S\ref{QIF}), then $c_\beta(S_F)=c_\beta(F)>0$
and $c_\beta(-S_F)=c_\beta(-F)<0$, where $c_\beta$ is the spectral invariant
defined in Remark~\ref{spectr}. Hence, one can apply
Traynor's argument from~\cite[\S 5]{Tr}
with $c_\beta$ instead of $c_+=c_{[\Lambda]}$.
\end{rem}

\begin{rem}[Lorentzian comparison]
The statement analogous to Lemma~\ref{SphMonNonneg} in Lorentz geometry
(in the sense explained in~\S\ref{spacetimes}) asserts that a causally
monotone curve in a {\it distinguishing\/} spacetime is future-directed,
see e.g.~\cite[Proposition 3.19]{MS}. Note that orderability 
is formally analogous to causality, so the Lorentzian statement 
requires a stronger assumption.
\end{rem}

A Legendrian isotopy parametrised by $\iota:L\times [0,1] \longrightarrow Y$ 
is called {\it immersed\/} or {\it embedded\/} if the map $\iota$ is an immersion
or an embedding. Clearly, an immersed isotopy is embedded if and only if $\iota$ 
is injective. These notions are obviously independent of the choice of
a parametrisation. The property of a Legendrian isotopy to be {\it immersed\/}
can also be expressed in terms of the pull-back of a contact form.

\begin{lem}
\label{Immersed}
A Legendrian isotopy $\iota:L\times [0,1]\to (Y, \xi=\ker\alpha)$
is \textbf{not\/} immersed at a point $(q,\tau)$ if and only if $(q,\tau)$ is a
critical point of the function $\iota^*\alpha(\frac{\p}{\p t})$ 
restricted to $L\times\{\tau\}$ with critical value zero.
\end{lem}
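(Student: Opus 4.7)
The plan is to reduce the statement to a short piece of linear algebra in one symplectic fibre. First I would note that failure of immersivity at $(q,\tau)$ admits a concrete geometric description: since $\iota|_{L\times\{\tau\}}$ is an embedding onto $\Lambda_\tau$, the differential $d\iota_{(q,\tau)}$ fails to be injective exactly when the transverse velocity $X:=d\iota\bigl(\tfrac{\p}{\p t}\bigr)\big|_{(q,\tau)}$ lies in $T_{\iota(q,\tau)}\Lambda_\tau$. This reduces the lemma to the claim that $X\in T_{\iota(q,\tau)}\Lambda_\tau$ if and only if the function $h:=\iota^*\alpha(\tfrac{\p}{\p t})$, restricted to $L\times\{\tau\}$, has a critical point at $q$ with value zero.

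The bridge between the two conditions is the following calculation. Because each slice $\Lambda_t$ is Legendrian, $\iota^*\alpha$ annihilates all vectors tangent to $L\times\{t\}$, so $\iota^*\alpha=h\,dt$. Taking the exterior derivative,
\[
\iota^*d\alpha \;=\; dh\wedge dt,
\]
and hence for every $V\in T_qL$, viewed as tangent to $L\times\{\tau\}$,
\[
d\alpha_{\iota(q,\tau)}\bigl(d\iota(V),\,X\bigr)
\;=\;(dh\wedge dt)\bigl(V,\tfrac{\p}{\p t}\bigr)
\;=\;V\bigl(h|_{L\times\{\tau\}}\bigr).
\]

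With this identity in hand I would invoke the fact that $T_{\iota(q,\tau)}\Lambda_\tau$ is a Lagrangian subspace of the symplectic vector space $(\xi_{\iota(q,\tau)},d\alpha)$, and therefore equals its own symplectic orthogonal inside $\xi$. In the forward direction, $X\in T\Lambda_\tau\subset\xi$ yields $h(q,\tau)=\alpha(X)=0$, and the displayed identity vanishes for every $V$ since both arguments lie in a Lagrangian; so $(q,\tau)$ is a critical point of $h|_{L\times\{\tau\}}$ with critical value zero. Conversely, the vanishing condition $h(q,\tau)=0$ puts $X$ into $\xi$, and criticality of $q$ for $h|_{L\times\{\tau\}}$ makes $X$ symplectically orthogonal to all of $T\Lambda_\tau$ inside $\xi$; the Lagrangian property then forces $X\in T\Lambda_\tau$.

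I do not anticipate any genuine obstacle: once the identity $\iota^*\alpha=h\,dt$ is written down, everything is fibrewise linear algebra. The only point that deserves emphasis is that both clauses on the right-hand side are indispensable — vanishing of the value is what places $X$ into $\xi$, while criticality, combined with the Lagrangian property of $T\Lambda_\tau$ inside $\xi$, is precisely what promotes that membership to $X\in T\Lambda_\tau$.
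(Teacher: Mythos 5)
Your proof is correct and follows essentially the same route as the paper: both reduce the statement to the identity $V\bigl(\iota^*\alpha(\tfrac{\p}{\p t})\bigr)=d\alpha\bigl(\iota_*V,\iota_*\tfrac{\p}{\p t}\bigr)$ for vertical $V$ and then conclude via the Lagrangian self-orthogonality of $T\Lambda_\tau$ in $(\xi,d\alpha)$. The only (cosmetic) difference is that you derive the identity from $\iota^*\alpha=h\,dt$ and $\iota^*d\alpha=dh\wedge dt$, whereas the paper obtains it from the Cartan formula for $d(\iota^*\alpha)$ evaluated on vector fields.
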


\begin{proof}
The condition $\iota^*\alpha(\frac{\p}{\p t})=0$ is equivalent to $\iota_*\frac{\p}{\p t}\in\xi_{\iota(q,\tau)}$.
The point $(q,\tau)$ is critical for $\iota^*\alpha(\frac{\p}{\p t})$ on $L\times\{\tau\}$ if and only if
$$
X\left(\iota^*\alpha(\tfrac{\p}{\p t})\right) = 0
$$
at $(q,\tau)$ for all {\it vertical\/} vector fields $X$ on~$L\times [0,1]$.
For a vertical~$X$, the commutator $[X,\frac{\p}{\p t}]$
is also vertical and $\iota^*\alpha(X)=\alpha(\iota_*X)=0$. Hence,  
$$
\begin{array}{rcl}
X\left(\iota^*\alpha(\tfrac{\p}{\p t})\right)&=& 
d(\iota^*\alpha)\left(X,\tfrac{\p}{\p t}\right) + \tfrac{\p}{\p t}(\iota^*\alpha(X)) + \iota^*\alpha\left([X,\tfrac{\p}{\p t}]\right)\\[2pt]
&=& d(\iota^*\alpha)\left(X,\tfrac{\p}{\p t}\right)\\[2pt]
&=& d\alpha\left(\iota_*X,\iota_*\tfrac{\p}{\p t}\right).
\end{array}
$$
Thus, our assumptions are equivalent to $\iota_*\frac{\p}{\p t}$ 
being skew-orthogonal to the Lagrangian subspace $\iota_*(T_q L)$ in the symplectic
vector space $(\xi_{\iota(q,\tau)}, d\alpha)$. This means precisely 
that $\iota_*\frac{\p}{\p t}\in \iota_*(T_q L)$  and the rank 
of $\iota$ is not maximal at~$(q,\tau)$.
\end{proof}

As an application, we show that being immersed characterises positive 
Legendrian isotopies among non-negative ones.

\begin{cor}
\label{NonnegEmbPos}
A non-negative Legendrian isotopy is positive if and only if it is immersed.
\end{cor}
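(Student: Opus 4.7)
The plan is to read off both directions directly from Lemma~\ref{Immersed} together with the elementary fact that a non-negative function attains a critical point of critical value zero at each of its zeros.

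For the forward implication, I would assume the isotopy is positive, so that the function $\iota^*\alpha(\frac{\p}{\p t})$ is strictly positive on all of $L\times [0,1]$. Then this function never takes the value zero, so the characterisation in Lemma~\ref{Immersed} (non-immersion points are precisely critical points with critical value zero of the restriction to a slice) cannot be satisfied at any $(q,\tau)$. Hence $\iota$ is immersed. This direction is essentially free.

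For the reverse implication, assume the isotopy is non-negative and immersed. Suppose for contradiction that it fails to be positive at some $(q,\tau)$, so $\iota^*\alpha(\frac{\p}{\p t})(q,\tau)=0$. On the slice $L\times\{\tau\}$, the function $\iota^*\alpha(\frac{\p}{\p t})$ is non-negative and vanishes at $(q,\tau)$, so $(q,\tau)$ is a minimum of this restriction, hence a critical point with critical value zero. Lemma~\ref{Immersed} then says $\iota$ is not immersed at $(q,\tau)$, contradicting our hypothesis.

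There is really no obstacle here; the only minor care needed is to notice that non-negativity of $\iota^*\alpha(\frac{\p}{\p t})$ on the whole product $L\times [0,1]$ forces the \emph{restriction} to any slice $L\times\{\tau\}$ to be non-negative as well, which is what allows a zero of the restriction to be recognised as a critical point with critical value zero and thereby fed into Lemma~\ref{Immersed}.
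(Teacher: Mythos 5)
Your proof is correct and follows essentially the same route as the paper: the `only if' direction is immediate (the paper simply calls a positive isotopy ``obviously immersed''), and the `if' direction is word-for-word the paper's argument, namely that a zero of the non-negative function $\iota^*\alpha(\frac{\p}{\p t})$ is an interior minimum on the slice, hence a critical point with critical value zero, contradicting Lemma~\ref{Immersed}.
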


\begin{proof}
A positive Legendrian isotopy is obviously immersed. To prove the `if' part,
let $\iota:L\times [0,1]\to (Y, \xi=\ker\alpha)$ be a parametrisation 
of an immersed non-negative isotopy. Suppose that the isotopy isn't positive. 
Then $\iota^*\alpha(\frac{\p}{\p t})=0$ 
at some point $(q,\tau)\in L\times[0,1]$.
Since $\iota^*\alpha(\frac{\p}{\p t})\ge 0$, it follows that this function
attains its minimum equal to zero at $(q,\tau)$. Hence, the isotopy 
is not immersed at that point by Lemma~\ref{Immersed}.
\end{proof}

\begin{rem} 
A related argument may be found in~\cite[Lemma~4.12(i)]{GKS}.
\end{rem}

The existence of an embedded isotopy between two Legendrians
may be inferred from a seemingly weaker assumption.

\begin{lem}
\label{ExistEmb}
If $\Lambda_0\cap\Lambda_t=\varnothing$ for all $t>0$
in a Legendrian isotopy $\{\Lambda_t\}_{t\in [0,1]}$,
then $\Lambda_0$ and $\Lambda_1$ are connected
by an embedded Legendrian isotopy. 
If $\{\Lambda_t\}_{t\in [0,1]}$ is also positive at $t=0$,
then $\Lambda_0$ and $\Lambda_1$ are connected
by a positive embedded Legendrian isotopy.
\end{lem}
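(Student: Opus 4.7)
The plan is to build the embedded isotopy as a short linear interpolation from $\Lambda_0$ to some nearby $\Lambda_\epsilon$ inside a Weinstein tubular neighbourhood, pushed forward by the time-one map of an ambient contact isotopy that carries $\Lambda_\epsilon$ to $\Lambda_1$ while fixing $\Lambda_0$ pointwise.

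First I would fix a Weinstein tubular neighbourhood $U\supset\Lambda_0$ and a contactomorphism $\Psi\colon U\to V\subset\Jet^1(\Lambda_0)$ identifying $\Lambda_0$ with the zero section $\mathrm O$, exactly as in the proof of Lemma~\ref{SphMonNonneg}. For $\epsilon>0$ small enough, $\Lambda_\epsilon = \Psi^{-1}(j^1 f_\epsilon)$ for a $C^1$-small function $f_\epsilon\colon\Lambda_0\to\R$, and the hypothesis $\Lambda_0\cap\Lambda_\epsilon=\varnothing$ is exactly the condition that $(df_\epsilon(q),f_\epsilon(q))\ne(0,0)$ for every $q\in\Lambda_0$. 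The linear interpolation $\gamma_u:=\Psi^{-1}(j^1(u f_\epsilon))$, $u\in[0,1]$, is then an embedded Legendrian isotopy from $\Lambda_0$ to $\Lambda_\epsilon$: at each $q$ the ray $u\mapsto(u\,df_\epsilon(q),u f_\epsilon(q))$ is injective with nonvanishing velocity.

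Next, I would apply the Legendrian isotopy extension theorem to the link isotopy $\{(\Lambda_0,\Lambda_t)\}_{t\in[\epsilon,1]}$, which is a genuine link isotopy by the disjointness hypothesis and in which the first component is stationary. This produces an ambient contact isotopy $\Xi_s$, $s\in[\epsilon,1]$, with $\Xi_\epsilon=\mathrm{id}$, $\Xi_s|_{\Lambda_0}=\mathrm{id}$ and $\Xi_s(\Lambda_\epsilon)=\Lambda_s$. The composition $u\mapsto\Xi_1(\gamma_u)$ is then the required embedded Legendrian isotopy: it runs from $\Lambda_0=\Xi_1(\Lambda_0)$ to $\Lambda_1=\Xi_1(\Lambda_\epsilon)$ and is embedded because a contactomorphism takes embeddings to embeddings.

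For the positive statement, positivity of the given isotopy at $t=0$ forces $\p_t f_t|_{t=0}>0$ on $\Lambda_0$, so after possibly shrinking $\epsilon$ one has $f_\epsilon>0$ everywhere. A short calculation in $\Jet^1(\Lambda_0)$ with its canonical form $dz-p\,dq$ gives $\gamma_u^*(dz-p\,dq)=f_\epsilon(q)\,du$, so $\gamma_u$ is a positive isotopy; since $\Xi_1$ is a co-orientation preserving contactomorphism, positivity is preserved under $\Xi_1$. The hard part is verifying that the extension $\Xi_s$ can be arranged to fix $\Lambda_0$ pointwise (not just setwise); this should come from running the standard proof of the extension theorem on the disjoint union $\Lambda_0\sqcup\Lambda_\epsilon$, using that the link isotopy keeps the first component truly stationary.
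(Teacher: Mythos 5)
Your proposal is correct and follows essentially the same route as the paper's proof: a short embedded (and, once $f_\epsilon>0$, positive) linear isotopy $\{j^1(uf_\epsilon)\}_{u\in[0,1]}$ in a jet-space neighbourhood of $\Lambda_0$, followed by pushing it forward with the time-one map of an ambient contact isotopy that carries $\Lambda_\epsilon$ to $\Lambda_1$ while leaving $\Lambda_0$ alone. The only difference is that the paper dissolves the ``hard part'' you flag at the end by applying the isotopy extension theorem to $\{\Lambda_t\}_{t\in[\epsilon,1]}$ inside the open contact manifold $Y\setminus\Lambda_0$ (where the isotopy lives by hypothesis), so the extension is compactly supported away from $\Lambda_0$ and fixes it automatically --- and in any case only setwise invariance $\Xi_1(\Lambda_0)=\Lambda_0$ is needed for your conclusion.
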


\begin{proof}
If we identify a neighbourhood
of $\Lambda_0$ with a neighbourhood of the zero section 
in $\Jet^1(\Lambda_0)$, then $\Lambda_\tau$ for small enough $\tau>0$ 
corresponds to $j^1(f_\tau)$ for a smooth function $f_\tau$ on $\Lambda_0$
such that zero is not its critical value. Hence, an embedded
isotopy connecting $\Lambda_0$ to $\Lambda_\tau$ can be defined 
by $\{j^1(sf_\tau)\}_{s\in [0,1]}$. If the given isotopy 
is positive at $t=0$, then it is positive and embedded on $[0,\tau]$
for small enough~$\tau>0$. 
(This corresponds to $f_{\tau'}>f_{\tau''}>0$ on $\Lambda_0$ for $\tau\ge\tau'>\tau''>0$.)
It remains to observe that $\Lambda_\tau$ is Legendrian isotopic
to $\Lambda_1$ in the complement of $\Lambda_0$ and therefore
there exists a contact isotopy $\{\phi_t\}_{t\in [\tau,1]}$ 
such that $\Lambda_0\cap\mathop{\mathrm{supp}}\phi_t =\varnothing$
and $\phi_t(\Lambda_\tau)=\Lambda_t$. Applying $\phi_1$
to the Legendrian isotopies from $\Lambda_0$ to $\Lambda_\tau$ 
constructed above completes the proof.
\end{proof}

\begin{rem}
\label{EmbHomotopic}
The embedded isotopy obtained in the lemma is homotopic
to the original Legendrian isotopy through Legendrian
isotopies. If the original isotopy is positive,
this homotopy is through positive isotopies.
\end{rem}

Since the space of positive functions on a manifold is
connected, the proof of Lemma~\ref{ExistEmb} implies 
the following result.

\begin{cor}
\label{NonDjV}
All \textbf{non} d\'ej\`a vu Legendrian links $(\Lambda,\Lambda')$
with $\Lambda\llcurly\Lambda'$ and a given $\Lambda$
are Legendrian isotopic by an isotopy fixing $\Lambda$.
\end{cor}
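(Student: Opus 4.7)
The plan is to reduce each non-déjà vu link with first component $\Lambda$ to a canonical model inside a tubular neighborhood of $\Lambda$, and then use the convexity of the space of positive functions on $\Lambda$. I would begin by fixing once and for all a contactomorphism $\Psi$ identifying a neighborhood $U$ of $\Lambda$ with a neighborhood of the zero section $\mathrm{O}$ in $\Jet^1(\Lambda)$, with $\Psi(\Lambda)=\mathrm{O}$ (as in \cite[Example 2.5.11]{Ge}).

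The reduction step mimics the first half of the proof of Lemma~\ref{ExistEmb}. Given a non-déjà vu link $(\Lambda,\Lambda')$, choose an embedded positive isotopy $\{\Lambda_t\}_{t\in[0,1]}$ from $\Lambda$ to $\Lambda'$. For sufficiently small $\tau>0$ one has $\Lambda_\tau\subset U$ with $\Psi(\Lambda_\tau)=j^1(f_\tau)$ for a $C^1$-small function $f_\tau\colon\Lambda\to\R$; positivity of the isotopy at $t=0$ forces $f_\tau>0$ on all of $\Lambda$ (in local coordinates on $\Jet^1(\Lambda)$, $\iota^*\alpha(\tfrac{\p}{\p t})=\tfrac{\p}{\p t}f_t$). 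Since the original isotopy is embedded, $\Lambda_t\cap\Lambda=\varnothing$ for every $t>0$, so the family $\{(\Lambda,\Lambda_t)\}_{t\in[\tau,1]}$ is a bona fide Legendrian isotopy of \emph{links} from $(\Lambda,\Psi^{-1}(j^1(f_\tau)))$ to $(\Lambda,\Lambda')$ with constant first component.

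To conclude, I apply this reduction to both given links $(\Lambda,\Lambda'_i)$, $i=1,2$, obtaining positive and (after shrinking each $\tau$ if necessary) $C^1$-small functions $F_i\colon\Lambda\to\R$ with $j^1(F_i)\subset\Psi(U)$, together with a $\Lambda$-fixing Legendrian isotopy of links from $(\Lambda,\Lambda'_i)$ to $(\Lambda,\Psi^{-1}(j^1(F_i)))$. The convex combination $F_s=(1-s)F_1+sF_2$ is again positive and $C^1$-small, so $\{(\Lambda,\Psi^{-1}(j^1(F_s)))\}_{s\in[0,1]}$ connects the two canonical models through disjoint Legendrian links while fixing $\Lambda$; concatenating the three pieces yields the desired isotopy. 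I anticipate no substantive obstacle: as the sentence introducing the corollary indicates, the entire argument is essentially a remark on the proof of Lemma~\ref{ExistEmb}, relying only on the convexity—and hence connectedness—of the set of positive functions on $\Lambda$.
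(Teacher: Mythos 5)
Your proposal is correct and follows the paper's own route: the paper derives this corollary in one line from the proof of Lemma~\ref{ExistEmb} together with the connectedness of the space of positive functions on $\Lambda$, which is exactly the reduction-to-canonical-model plus convex-interpolation argument you spell out. No gaps; you have simply written out the details the paper leaves implicit.
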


It follows that the Legendrian isotopy class of a non d\'ej\`a vu link
with $\Lambda\llcurly\Lambda'$ is completely determined by the Legendrian
isotopy class of its components. One way of representing this class
of links is to take the link formed by a Legendrian and its sufficiently small
shift along the Reeb flow of a contact form. 

\begin{prop}
\label{DejavuOrdNoEmb}
Suppose that $\mathcal{L}$ is either the Legendrian isotopy class 
of the zero section in $\Jet^1(L)$ or an orderable Legendrian
isotopy class of spheres. Let $(\Lambda,\Lambda')$ be 
a Legendrian link with components in~$\mathcal{L}$.
\begin{itemize}
\item[(i)] 
If $\Lambda\llcurly\Lambda'$, then every embedded isotopy
from $\Lambda$ to $\Lambda'$ is positive.
\item[(ii)]
If $(\Lambda,\Lambda')$ is a d\'ej\`a vu Legendrian link,
then for every Legendrian isotopy $\{\Lambda_t\}_{t\in [0,1]}$ with
$\Lambda_0=\Lambda$ and $\Lambda_1=\Lambda'$ there exists
a $t_0>0$ such that $\Lambda\cap\Lambda_{t_0}\neq\varnothing$
and a $t_1<1$ such that $\Lambda'\cap\Lambda_{t_1}\neq\varnothing$.
\end{itemize}
\end{prop}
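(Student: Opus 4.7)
The plan is to prove (i) first, since (ii) will follow with little extra effort by combining (i) with Lemma~\ref{ExistEmb}. The orderability hypothesis on $\mathcal{L}$ will enter only through (i); in (ii) it is used implicitly via the appeal to (i).

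For (i), I will start from an arbitrary embedded Legendrian isotopy $\{\Lambda_t\}_{t\in[0,1]}$ from $\Lambda$ to $\Lambda'$. Because the isotopy is embedded, the submanifolds $\Lambda_t$ are pairwise disjoint, and since $\Lambda\llcurly\Lambda'$ gives in particular $\Lambda\lle\Lambda'$, Lemma~\ref{EmbIncrease} will show that the isotopy is $\lle$-monotone. In each of the two cases of the hypothesis on $\mathcal{L}$ --- for the zero section in $\Jet^1(L)$ by the remark following Question~\ref{QMonNonneg}, and for an orderable Legendrian isotopy class of spheres by Lemma~\ref{SphMonNonneg} --- $\lle$-monotonicity upgrades to non-negativity. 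Since an embedded isotopy is \emph{a fortiori} immersed, Corollary~\ref{NonnegEmbPos} will then promote non-negative to positive, completing~(i).

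For (ii), I will argue by contradiction. If $\{\Lambda_t\}$ joins $\Lambda$ to $\Lambda'$ with $\Lambda\cap\Lambda_t=\varnothing$ for every $t>0$, then Lemma~\ref{ExistEmb} produces an embedded Legendrian isotopy from $\Lambda$ to $\Lambda'$, and by condition~(P) together with part~(i) this embedded isotopy is automatically positive, violating (DjV). The existence of $t_0$ follows. For the symmetric assertion about $\Lambda'$, I will pass to the time-reversed isotopy $\tilde\Lambda_t=\Lambda_{1-t}$: the assumption $\Lambda'\cap\Lambda_t=\varnothing$ for all $t<1$ becomes $\tilde\Lambda_0\cap\tilde\Lambda_t=\varnothing$ for all $t>0$, so Lemma~\ref{ExistEmb} gives an embedded Legendrian isotopy from $\Lambda'$ to $\Lambda$; reversing its parameter yields an embedded Legendrian isotopy from $\Lambda$ to $\Lambda'$, and (i) once more forces positivity, contradicting (DjV). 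I do not foresee any serious obstacle; the real content sits in part~(i), where orderability is precisely what turns the purely topological condition of $\lle$-monotonicity along an embedded isotopy into the dynamical condition of non-negativity.
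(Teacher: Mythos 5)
Your proposal is correct and follows essentially the same route as the paper: part (i) via Lemma~\ref{EmbIncrease}, then Lemma~\ref{SphMonNonneg} (or the $\Jet^1(L)$ case from the discussion preceding Question~\ref{QMonNonneg}), then Corollary~\ref{NonnegEmbPos}; and part (ii) by contradiction via Lemma~\ref{ExistEmb} and part (i). Your explicit time-reversal for the $\Lambda'$ side of (ii) is a detail the paper leaves implicit, but it is the intended argument.
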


\begin{proof}
(i) An embedded Legendrian isotopy from $\Lambda$ to $\Lambda'$ is $\lle$-monotone by Lemma~\ref{EmbIncrease}.
Any $\lle$-monotone isotopy in~$\mathcal{L}$ is non-negative 
by Lemma~\ref{SphMonNonneg} and the discussion preceding it. 
Finally, every embedded non-negative isotopy is positive by Corollary~\ref{NonnegEmbPos}.

\smallskip
\noindent
(ii) Assume that there is a Legendrian isotopy connecting $\Lambda$ to $\Lambda'$
which is disjoint from $\Lambda$ for all $t>0$ or 
from $\Lambda'$ for all $t<1$. Then by Lemma~\ref{ExistEmb} 
there is an embedded isotopy connecting $\Lambda$ to $\Lambda'$.
Applying (i) we obtain a contradiction with the definition of a d\'ej\`a vu link.
\end{proof}

\begin{rem}
Proposition~\ref{DejavuOrdNoEmb} holds true for every Legendrian isotopy class
in which $\lle$-monotone isotopies are non-negative. So it will hold for every 
orderable Legendrian isotopy class if Question~\ref{QMonNonneg} can be
answered in the positive. 
\end{rem}

\begin{rem} 
\label{UODejavuOrdNoEmb}
A version of Proposition~\ref{DejavuOrdNoEmb} 
can be formulated for {\it universally orderable\/}
Legendrian isotopy classes, see~\cite[\S 4.3]{ChNe3}. Namely,
one has to assume that the Legendrian isotopies in (i) and~(ii)
are {\it a~priori\/} homotopic through Legendrian isotopies 
to positive isotopies. A careful inspection of the proofs in this section  
shows that the modified statement is true for a universally orderable 
class of Legendrian spheres. On the other hand, the result can be false 
without this additional assumption, see~\S\ref{yxl} and~\S\ref{DjVS1}.
\end{rem}

\subsection{Quadratic at infinity functions and Legendrians}
\label{QIF}
\aftersubsec
Let $L$ be a closed connected manifold. A smooth function 
$$
S=S(q,\xi):L\times\R^N\to\R
$$ 
is said to be {\it quadratic at infinity\/} if 
$$
S(q,\xi)=\sigma(q,\xi) + Q(\xi),
$$
where $\sigma$ has compact support in $L\times \R^N$ 
and $Q$ is a non-degenerate quadratic 
form on~$\R^N_\xi$. Denote by
$$
S^c:=\{(q,\xi)\in L\times\R^N\mid S(q,\xi)< c\}
$$
the sublevel sets of $S$ and let $S^{-\infty}$ be the set $S^c$ 
for a sufficiently negative~$c\ll 0$. The following standard lemma 
is an immediate consequence of the isotopy extension theorem.

\begin{lem}
\label{stability}
Let $S_t = \sigma_t + Q$, $t\in [0,1]$ be a smooth family of quadratic at infinity
functions. Suppose that $a\in\R$ is not a critical value of $S_t$ for all~$t$.
Then the inclusions $\imath^a_t:S^a_t\hookrightarrow L\times\R^N$ are
isotopic by a compactly supported isotopy constant on $S_t^{-\infty}$. 
In particular, the relative homology groups $\HH_*(S_t^a,S_t^{-\infty};\bbk)$ 
are isomorphic for all~$t$ and the induced homomorphisms
$$
\left(\imath^a_t\right)_*: \HH_*(S_t^a,S_t^{-\infty};\bbk) \longrightarrow \HH_*(L\times\R^N,S_t^{-\infty};\bbk)
$$
do not depend on~$t$.
\end{lem}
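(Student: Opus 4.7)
The plan is to prove the lemma by a Moser-type construction: produce a smooth $t$-dependent vector field $X_t$ on $L\times\R^N$ whose flow $\Phi_t$ realises the desired ambient isotopy, maps $S_0^a$ onto $S_t^a$, has support in a fixed compact set, and is trivial on sufficiently negative sublevel sets.

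First I would establish a uniform regularity band around $a$. Since $S_t=\sigma_t+Q$ agrees with $Q$ outside the common (compact) support $K$ of the family $\sigma_t$, all critical points of every $S_t$ lie in $K$, so the set of critical values of $S_t$ for $t\in[0,1]$ is a compact subset of $\R$. As $a$ avoids it, there exists $\eps>0$ such that $\nabla S_t\neq 0$ on the slab $\{|S_t-a|\le 2\eps\}$ for every~$t$.

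Next, fix a smooth cutoff $\chi\colon\R\to[0,1]$ equal to $1$ on $[a-\eps,a+\eps]$ and supported in $(a-2\eps,a+2\eps)$, together with an auxiliary Riemannian metric. Setting $g_t:=\p_t\sigma_t$ and
$$
X_t:=-\chi(S_t)\,\frac{g_t}{|\nabla S_t|^2}\,\nabla S_t,
$$
the prefactor $\chi(S_t)$ kills the apparent singularity at the critical points of $S_t$, so $X_t$ is smooth and compactly supported in $K$. A direct computation then yields
$$
\tfrac{d}{dt}S_t(\Phi_t(x))=\bigl(1-\chi(S_t(\Phi_t(x)))\bigr)\,g_t(\Phi_t(x)),
$$
which vanishes whenever $S_t(\Phi_t(x))=a$. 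Viewed as a scalar ODE in $y(s)=S_s(\Phi_s(x))$ with right-hand side vanishing at $y=a$, uniqueness forces $\Phi_t(\{S_0=a\})=\{S_t=a\}$ and, by continuity, $\Phi_t(S_0^a)=S_t^a$. Triviality on $S_t^{-\infty}$ is automatic: $X_t$ vanishes outside $K$, so $\Phi_t$ is the identity on $S_t^c$ for $c\ll 0$.

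The isomorphism of relative homology groups and the independence of the induced maps then follow by applying $\Phi_t$ to the triple $(L\times\R^N,S_t^a,S_t^{-\infty})$. I expect the main obstacle to be the careful placement of the cutoff: it has to multiply the scalar coefficient rather than the whole vector field, because only then is the level $a$ preserved exactly and hence each sublevel set. The remaining ingredients are standard Moser bookkeeping and the uniform control of critical values coming from the quadratic-at-infinity hypothesis together with compactness of~$[0,1]$.
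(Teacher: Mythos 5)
Your argument is correct and is essentially the paper's: the lemma is stated there as an immediate consequence of the isotopy extension theorem, and your Moser-type vector field (cut off by $\chi(S_t)$ so that the level $\{S_t=a\}$ is preserved exactly and the flow is supported in a fixed compact set away from $S_t^{-\infty}$) is precisely the standard construction proving that extension statement in this quadratic-at-infinity setting. One cosmetic slip: the critical points of $S_t$ need not all lie in $K$, since $Q$ is critical along all of $L\times\{0\}$ with value $0$; but the set of critical values over all $t\in[0,1]$ is still compact and avoids $a$, so your uniform regularity band exists and the rest of the argument is unaffected.
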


\begin{rem}[Persistence and barcodes]
\label{pers}
If $S$ is a Morse function (i.e.\ its critical points are non-degenerate), 
then the relative homology groups $\HH_*(S^c,S^{-\infty};\bbk)$, $c\in\R$, 
together with the homomorphisms induced by the inclusions 
$\imath^{c,c'}:S^c\hookrightarrow S^{c'}$ for $c\le c'$
form a {\it persistence module\/} in the sense of \cite[Definition~1.1.1]{PRSZ}.
Lemma~\ref{stability} implies that if two quadratic at infinity Morse functions 
can be connected by a family such that $a\in\R$ is never a critical value, then 
in the {\it barcodes\/} associated to their persistence modules 
by \cite[Theorem 2.1.1]{PRSZ} the number of bars containing $a$ is the same.
\end{rem}

\begin{rem}[Spectral invariants]
\label{spectr}
Let $\R^N=V_+\times V_-$ be a decomposition into linear subspaces such that $Q$ 
is positive definite on~$V_+$ and negative definite on~$V_-$. The dimension
$\nu=\dim V_-$ is the (negative) index of~$Q$. The map
$$
\HH_*(L;\bbk) \ni \beta \longmapsto \beta\times [V_-]\in\HH_{*+\nu}(L\times\R^N,S^{-\infty};\bbk)
$$
is an isomorphism between the homology of $L$ and the shifted homology
of $L\times\R^N$ relative to~$S^{-\infty}$. Following Viterbo~\cite[\S2]{Vi}, 
one can therefore associate critical values of~$S$ to homology classes on~$L$.
Namely, for $\beta\in \HH_k(L;\bbk)$, define
$$
c_\beta(S):=\inf\bigl\{c\in\R\mid \beta\times [V_-]\in \left(\imath^c\right)_*\HH_{k+\nu}(S^c,S^{-\infty};\bbk)\bigr\},
$$
where $\imath^c:S^c\to L\times\R^N$ is the inclusion. This is a critical value of $S$
by basic Morse theory. Moreover, $c_\beta(S_t)$ is a continuous function of $t$ 
for any smooth family of quadratic at infinity functions, cf.~\cite[Proposition 2.5]{Vi}.
In the barcode associated to $S$, the numbers $c_\beta(S)$ are precisely the endpoints
of the {\it infinite\/} bars.
\end{rem}

Let $\Jet^1(L)$ denote the $1$-jet bundle of~$L$ equipped with 
the standard contact form $du-\lcan$,
where $u$ is the fibre coordinate in $\Jet^0(L)$ and $\lcan=p\,dq$
is the Liouville form on $T^*L$.

\begin{df} A smooth function 
$$
S=S(q,\xi):L\times\R^N\to\R
$$ 
is  a {\it generating function\/} for a Legendrian submanifold $\Lambda\subset\Jet^1(L)$ 
if zero is a regular value of the partial differential $d_\xi S$ and the map
\begin{equation}
\label{gendef}
\{d_\xi S(q,\xi)=0\}\ni (q,\xi) \longmapsto (q,d_q S(q,\xi),S(q,\xi))\in \Jet^1(L)
\end{equation}
is a diffeomorphism onto~$\Lambda$. 
\end{df}

Note that (Morse) critical points of $S$
as a function of both variables $(q,\xi)$ are in one-to-one 
correspondence with (transverse) intersection points of $\Lambda$
with $\{p=0\}\subset\Jet^1(L)$. In particular, the intersection
of $\Lambda$ with the zero section is parametrised by the
critical points of $S$ with critical value zero.

By Chekanov's theorem~\cite{Che}, for any Legendrian isotopy  
$\{\Lambda_t\}_{t\in [0,1]}$ such that $\Lambda_0$ is the zero section $\mathrm{O}\subset\Jet^1(L)$, 
there exists a smooth family of {\it quadratic at infinity\/} 
generating functions 
$$
S_t = \sigma_t + Q :L\times\R^N\to\R
$$ 
for $\Lambda_t$ with $\sigma_0\equiv 0$. 
Furthermore, this family is unique up to stabilisations and fibrewise
diffeomorphisms by the Viterbo--Th\'eret theorem~\cite{Vi, Th1, Th2}.

\section{Examples of d\'ej\`a vu links}

\subsection{D\'ej\`a vu \textit{versus\/} Wiedersehen}
\label{yxl}
\aftersubsec
Let $M$ be a connected manifold, $\dim M\ge 2$, 
and let $ST^*M$ be its co-sphere bundle with the canonical contact structure. 
The fibres of $ST^*M$ are Legendrian spheres and the
links formed by any two of them are Legendrian isotopic.

The link of two fibres of $ST^*M$ may satisfy condition (P) for some~$M$. Suppose, for instance, that there 
is a Riemannian metric on $M$ making it into a $Y^x_\ell$-manifold, 
i.e.\ such that all unit speed geodesics from $x\in M$ return to $x$ 
at time~$\ell>0$.
(Examples of $Y^x_\ell$-manifolds include compact rank one symmetric spaces and
certain exotic spheres, see~\cite[\S 7.C]{Be}.)
The co-geodesic flow of this metric defines a positive
Legendrian loop based at the fibre $ST_x^*M$.
Hence, there is a positive Legendrian isotopy between 
(any) two fibres, see~\cite[\S 8]{ChNe2}.

In the special case when $M$ is diffeomorphic to the sphere,
the link of two fibres satisfies (P) but is {\it not\/}
d\'ej\`a vu. Indeed, any two points $x\ne y$ on $M$
are antipodal for the pull-back of the
standard round metric by some diffeomorphism. 
The co-geodesic flow of this metric defines an 
{\it embedded\/} positive Legendrian isotopy 
from $ST_x^*M$ to $ST_y^*M$. 
(The term `Wiedersehen' refers to Riemannian manifolds 
on which the cut locus of every point is another point; 
the only such manifolds are the standard round spheres~\cite{Ka,Ya}.)

\begin{prop}
\label{BSnondjv}
Let $M$ be a connected manifold not homeomorphic to the sphere.
If there is a positive Legendrian isotopy connecting 
two different fibres of $ST^*M$, then they form 
a d\'ej\`a vu Legendrian link.
\end{prop}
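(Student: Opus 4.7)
The plan is to argue by contradiction: suppose that the link $(\Lambda_1,\Lambda_2):=(ST^*_xM, ST^*_yM)$ is not déjà vu. Since by hypothesis $\Lambda_1\llcurly\Lambda_2$, Corollary~\ref{NonDjV} yields a Legendrian isotopy fixing $\Lambda_1$ from $(\Lambda_1,\Lambda_2)$ to the model link $(\Lambda_1,\phi^R_\eps(\Lambda_1))$, where $\phi^R_\eps$ denotes the time-$\eps$ Reeb flow of the contact form associated to a chosen Riemannian metric on $M$ and $\eps>0$ is smaller than the injectivity radius at $x$ (so that the short Reeb flow is embedded and the model link is indeed non déjà vu). In particular, setting $n=\dim M$, the Legendrian spheres $\Lambda_2$ and $\phi^R_\eps(\Lambda_1)$ are freely homotopic as maps $S^{n-1}\to ST^*M\setminus\Lambda_1$.

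Next I would push this free homotopy down to the base via the projection $\pi:ST^*M\setminus\Lambda_1\to M\setminus\{x\}$. The restriction $\pi|_{\Lambda_2}$ is the constant map to $y$ and hence null-homotopic. The restriction $\pi|_{\phi^R_\eps(\Lambda_1)}$ is $p\mapsto\exp_x(\eps p^\sharp)$, which diffeomorphically parametrises the small geodesic sphere $S_\eps(x)\subset M\setminus\{x\}$. Hence the non-déjà-vu assumption forces $[S_\eps(x)]=0$ in $\pi_{n-1}(M\setminus\{x\})$ (the distinction between pointed and free homotopy classes when $n=2$ is harmless because the trivial element has only one conjugate).

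The remaining, and main, step is to see that this vanishing forces $M$ to be homeomorphic to $S^n$. In the homotopy long exact sequence of the pair $(M,M\setminus\{x\})$ the class $[S_\eps(x)]$ is the image under the connecting homomorphism of the generator of $\pi_n(M,M\setminus\{x\})\cong\Z$, so its vanishing is equivalent to the existence of a map $S^n\to M$ of total degree one. A standard covering-space argument — any such map lifts to the universal cover $\widetilde M$, and the local-degree identity at the preimages of $x$ forces both $\pi_1(M)=0$ and $\widetilde M=M$ to be dominated by $S^n$ with degree one — shows that $M$ must then be a closed simply connected homology $n$-sphere, hence a homotopy $n$-sphere. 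The Poincaré conjecture then identifies $M$ with $S^n$, contradicting the hypothesis. The contact-geometric content of the proposition is therefore entirely encapsulated in Corollary~\ref{NonDjV}; the main obstacle is organisational, namely verifying that the topological chase through homotopy groups really does deliver the final contradiction for every closed manifold that is not a sphere.
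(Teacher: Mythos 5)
Your proof is correct and takes essentially the same route as the paper: both arguments reduce the non--d\'ej\`a vu hypothesis to the statement that the geodesic sphere $S_\eps(x)$ is null-homotopic in $M\setminus\{x\}$ (you via Corollary~\ref{NonDjV} and projection of the link isotopy, the paper by projecting the embedded positive isotopy directly and capping off), producing a map $S^n\to M$ that is one-to-one over a neighbourhood of $x$, after which the covering-space, degree, homology-sphere and Poincar\'e-conjecture steps are identical. One side remark: your claim that $\pi_n(M,M\setminus\{x\})\cong\Z$ is false in general (for $M=T^2$ the connecting map identifies $\pi_2(M,M\setminus\{x\})$ with the infinitely generated kernel of $F_2\to\Z^2$), but this is harmless because all you actually use is that the vanishing of $[S_\eps(x)]$ in $\pi_{n-1}(M\setminus\{x\})$ lets you cap the geodesic sphere by a disc mapped into $M\setminus\{x\}$ and glue it to the geodesic ball.
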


\begin{proof}
Let us assume that there is an embedded positive Legendrian
isotopy from $ST_x^*M$ to $ST_y^*M$, $x\ne y\in M$, 
and prove that $M$ must be homeomorphic to the sphere.

As in the proof of Lemma~\ref{ExistEmb}, we may arrange that
the embedded isotopy has a standard form near its endpoints.
In the case of $ST^*M$, this standard form may be taken to be
the action of a co-geodesic flow. It follows that if 
$\pi:ST^*M\to M$ is the bundle projection and $\iota:S^{n-1}\times [0,1]\to ST^*M$
is a parametrisation of the isotopy, then the map $\pi\circ\iota: S^{n-1}\times (0,1)\to M$ 
extends to a smooth map $f:S^n\to M$, $n=\dim M$. Moreover, there is a contractible 
neighbourhood $U\ni x$ such that $f: f^{-1}(U)\to U$ is one-to-one.

The result now follows from a standard topological argument, cf. e.g.~\cite[\S 3.8]{McD}. 
Let $p:\wt{M}\to M$ be the universal covering, then $f$ lifts to a map $\wt{f}:S^n\to\wt{M}$.
There is a connected component $V$ of $\pi^{-1}(U)$ such that ${\wt{f}}^{-1}(V)=f^{-1}(U)$ 
and the map $\wt{f}$ is one-to-one over $V$. Therefore $\wt{f}$ 
has topological degree one. Since $S^n$ is compact, it follows that 
$\wt{f}$ is surjective onto $\wt{M}$. 
Hence, $p^{-1}(U)=\wt{f}(f^{-1}(U))=V$ and so $p$ is the trivial covering. 
This shows that $M=\wt{M}$ is simply connected and closed.
If $\alpha\in\HH^k(M;\bbk)$ is a non-zero cohomology class with $k\ne 0,n$ 
and coefficients in any field~$\bbk$,
then by Poincar\'e duality (on the closed orientable manifold~$M$) 
there is a class $\beta\in \HH^{n-k}(M;\bbk)$ such that $\alpha\cup\beta\ne 0\in \HH^n(M;\bbk)$. 
Then $0\ne f^*(\alpha\cup\beta)=f^*(\alpha)\cup f^*(\beta)$,
which is impossible since the cohomology of $S^n$ is trivial in degrees $k\ne 0,n$.
Thus the cohomology of $M$ with any (e.g.\ integer) coefficients 
vanishes in all degrees $k\ne 0, n$. This shows that $M$ is homeomorphic
to the sphere by the classification of surfaces for $n=2$
and by the theorems of Perelman, Freedman, and Smale for
$n=3, 4$, and~$\ge 5$, respectively.
\end{proof}

The existence of a positive Legendrian loop based at a fibre 
of $ST^*M$ implies that the universal cover $\wt{M}$ is 
compact~\cite{ChNe2} and the integral cohomology ring $\HH^*(\wt{M};\Z)$
is isomorphic to that of a CROSS~\cite{FrLaSch}. If the
loop does not intersect the fibre except at its basepoint, 
then $M$ is either simply connected or homotopy equivalent
to the real projective space $\R\mathrm{P}^n$ by~\cite{Da}.
The above proposition shows that the existence of an
embedded positive Legendrian isotopy between two fibres imposes
an even stronger topological condition on~$M$.
Note also that the proof does not use any tools from
contact topology such as generating functions or
Rabinowitz--Floer homology.

There are obvious embedded Legendrian isotopies
between two fibres of $ST^*M$ obtained by moving
the first fibre along an embedded curve
connecting the corresponding points in~$M$.
(Such isotopies are not positive or non-negative,
see~\cite[\S 6]{ChNe1}.) The Legendrian isotopy class 
of the fibre of $ST^*M$ is always universally orderable~\cite{ChNe3}
and so Remark~\ref{UODejavuOrdNoEmb} shows that 
those obvious isotopies are not homotopic to
a positive one.

\subsection{D\'ej\`a vu Legendrian links in~$T^*L\times S^1$ ({cf.~\cite[\S 8]{CS}})}
\label{DjVS1}
\aftersubsec
Let $Y:= T^*L\times S^1$ be the quotient of $\Jet^1(L)$ by the $\Z$-action 
generated by the shift~$(q,p,u)\mapsto (q,p,u+1)$ and denote by $\pi:\Jet^1(L)\to Y$ 
the projection to the quotient. The canonical contact form $du-\lcan$ is invariant under 
this action and descends to a contact form with periodic Reeb flow on~$Y$. 
In particular, every Legendrian isotopy class in $Y$ is nonorderable. 

Let $f:L\to\R$ be a smooth function such that 
\begin{itemize}
\item[1)] $\max_L |f|<\tfrac{1}{2}$;
\item[2)] zero is not a critical value of~$f$;
\item[3)] $f$ changes sign on $L$.
\end{itemize}
The first two conditions guarantee that $\pi(j^1(f))$ is a Legendrian submanifold of~$Y$ 
disjoint from $\pi(\mathrm{O})$.

\begin{prop}
$\bigl(\pi(\mathrm{O}),\pi(j^1(f))\bigr)$ is a d\'ej\`a vu link in~$Y$.
\end{prop}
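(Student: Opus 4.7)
The plan is to check condition~(P) by an explicit construction and condition~(DjV) by contradiction, lifting any hypothetical embedded positive isotopy to $\Jet^1(L)$ and exploiting spectral invariants of generating functions.

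For~(P), since $\pi(j^1(f+1))=\pi(j^1(f))$ in~$Y$, the family $\iota(q,t):=\pi\bigl(q,\,t\nabla f(q),\,t(f(q)+1)\bigr)$, $t\in[0,1]$, parametrises a Legendrian isotopy from $\pi(\mathrm O)$ to $\pi(j^1(f))$, and $\iota^{*}(du-\lcan)(\tfrac{\p}{\p t})=f(q)+1>\tfrac12$ (by condition~(1)) shows that it is positive.

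For~(DjV), suppose for contradiction that an embedded positive Legendrian isotopy $\iota:L\times[0,1]\to Y$ exists. Since $\iota|_{L\times\{0\}}$ has trivial winding in the $S^1$-factor of $Y=T^*L\times S^1$, the $\Z$-covering $\pi:\Jet^1(L)\to Y$ admits a lift $\wt\iota$ with $\wt\iota|_{L\times\{0\}}$ parametrising $\mathrm O$; injectivity of $\iota$ together with compactness of the domain make $\wt\iota$ itself an embedding, its terminal Legendrian is $\wt\Lambda_{1}=j^1(f+n)$ for some $n\in\Z$, and the lift remains positive because the contact forms on $\Jet^1(L)$ and~$Y$ are both $du-\lcan$. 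The decisive consequence of embeddedness in~$Y$ is that $\wt\Lambda_{t}$ is disjoint from the shifted zero section $\{u=1,\,p=0\}$ for every $t\in[0,1]$, since any intersection point would project to a point of $\Lambda_{t}\cap\pi(\mathrm O)$, contradicting the embeddedness of $\iota$.

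By Chekanov's theorem (\S\ref{QIF}) there is a smooth family of quadratic-at-infinity generating functions $S_t$ for $\wt\Lambda_{t}$ with $S_{0}$ a non-degenerate quadratic form. The spectral invariants $c_{\beta}(S_{t})$ from Remark~\ref{spectr} are continuous in~$t$ and strictly increasing along a positive Legendrian isotopy. Applied to $\beta=[\mathrm{pt}]\in\HH_{0}(L;\bbk)$, this forces $\min(f+n)>0$, which combined with $\min f>-\tfrac12$ yields the integer $n\ge 1$. Applied to the fundamental class $\beta=[L]$, it gives $c_{[L]}(S_{1})=n+\max f>1$ (using $\max f>0$ from condition~(3)), while $c_{[L]}(S_{0})=0$. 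By the intermediate value theorem there is a $t^{*}\in(0,1)$ with $c_{[L]}(S_{t^{*}})=1$. Hence $1$ is a critical value of $S_{t^{*}}$, and the corresponding critical points produce intersection points of $\wt\Lambda_{t^{*}}$ with $\{u=1,\,p=0\}$, contradicting the disjointness above.

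The only delicate input is the strict monotonicity of $c_{\beta}$ along a positive Legendrian isotopy, a standard consequence of positivity in the sublevel-set comparison of Lemma~\ref{stability}; everything else is a routine combination of lifting, Chekanov's theorem, and the intermediate value theorem, in the same spirit as the Traynor-style argument recalled in the remark on irreversible Legendrian links.
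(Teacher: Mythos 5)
Your proof is correct and follows essentially the same route as the paper's: lift the hypothetical embedded positive isotopy to $\Jet^1(L)$, show the integer shift is positive, and use the continuity of $c_{[L]}$ along a family of quadratic at infinity generating functions to force an integer critical value, hence an intersection of some $\wt\Lambda_{t}$ with an integer translate of the zero section projecting onto $\pi(\mathrm{O})$. The only cosmetic differences are that the paper obtains $k>0$ by citing \cite[Corollary~5.4]{ChNe1} rather than via monotonicity of $c_{[\mathrm{pt}]}$ (where what you actually need is $\min f<0$ from condition~(3), not the bound $\min f>-\tfrac12$), and that it targets the critical value $k$ rather than~$1$.
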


\begin{proof}
First, taking the projection to $Y$ of the positive linear isotopy from $\mathrm{O}$ to $j^1(f+1)$ 
shows that $\pi(\mathrm{O})\llcurly\pi(j^1(f))$. 
Secondly, we need to prove that a positive isotopy from $\pi(\mathrm{O})$ to $\pi(j^1(f))$ 
cannot be embedded. Such an isotopy lifts to a positive isotopy 
from $\mathrm{O}$ to $j^1(f+k)$ for some $k\in\Z$. 
By \cite[Corollary~5.4]{ChNe1}, we have $f+k\ge 0$ on $L$ and therefore $k>0$. 
Consider now a family $S_t$, $t\in [0,1]$, of quadratic at infinity generating functions 
for the lifted isotopy and the spectral invariant $c_{[L]}$ associated 
to the fundamental class $[L]\in\HH_{\dim L}(L;\Z/2)$.
Then $c_{[L]}(S_0)=0$ and $c_{[L]}(S_1)=\max_L (f+k) >k$ so that
$c_{[L]}(S_\tau)=k$ for some $\tau\in (0,1)$ by the continuity of the spectral invariant.
Hence, $k$ is a critical value of $S_\tau$ or, in other words, the Legendrian generated 
by $S_\tau$ intersects $j^1(k)$. The projection of this Legendrian to $Y$ intersects 
$\pi(j^1(k))=\pi(\mathrm{O})$ and thus the positive isotopy from $\pi(\mathrm{O})$ 
to $\pi(j^1(f))$ in $Y$ is not embedded as claimed. 
\end{proof}

The linear isotopy from the zero section $\mathrm{O}$ to $j^1(f)$ projects 
to an {\it embedded\/} Legendrian isotopy from $\pi(\mathrm{O})$ 
to $\pi(j^1(f))$ in~$Y$, which shows that Proposition~\ref{DejavuOrdNoEmb} 
does not hold in this case. Note, however, that such an embedded isotopy
cannot be homotopic to a positive isotopy. (Otherwise it would lift to
an isotopy from $\mathrm{O}$ to $j^1(f+k)$ for $k>0$ and the last
part of the proof of the proposition would again lead to a contradiction.)
The Legendrian isotopy class of $\pi(\mathrm{O})$ is {\it universally\/}
orderable, so this agrees with Remark~\ref{UODejavuOrdNoEmb}.

\subsection{D\'ej\`a vu Legendrian links in $\Jet^1(S^1)$}
\label{EmbJetS1}
\aftersubsec
The starting point of the construction is the positive Legendrian
isotopy in $\Jet^1(\R)$ depicted on Fig.~\ref{PosIso}. The figure shows
the wavefronts, i.e.\ the projections of the Legendrians 
to the $(q,u)$-plane. The dashed line represents the zero section $\mathrm{O}$.
The solid wavefront is evolving by a Legendrian isotopy that is
positive because at every moment the points on the wavefront are
moving upwards with respect to the tangent line to the wavefront.
The Legendrian $\Lambda$ in (d) has a single transverse critical 
point (i.e.\ intersection with $\{p=0\}$) in $u\le 0$. 

Outside of a suitably chosen segment in $\R$ the isotopy coincides
with the linear isotopy from $\mathrm{O}$ to the $1$-jet of
a positive constant function. Therefore it can be completed to a
positive isotopy in $\Jet^1(S^1)$. 

\begin{figure}[ht]
\centering
\captionsetup{margin=-1cm}
\includegraphics[scale=0.6]{./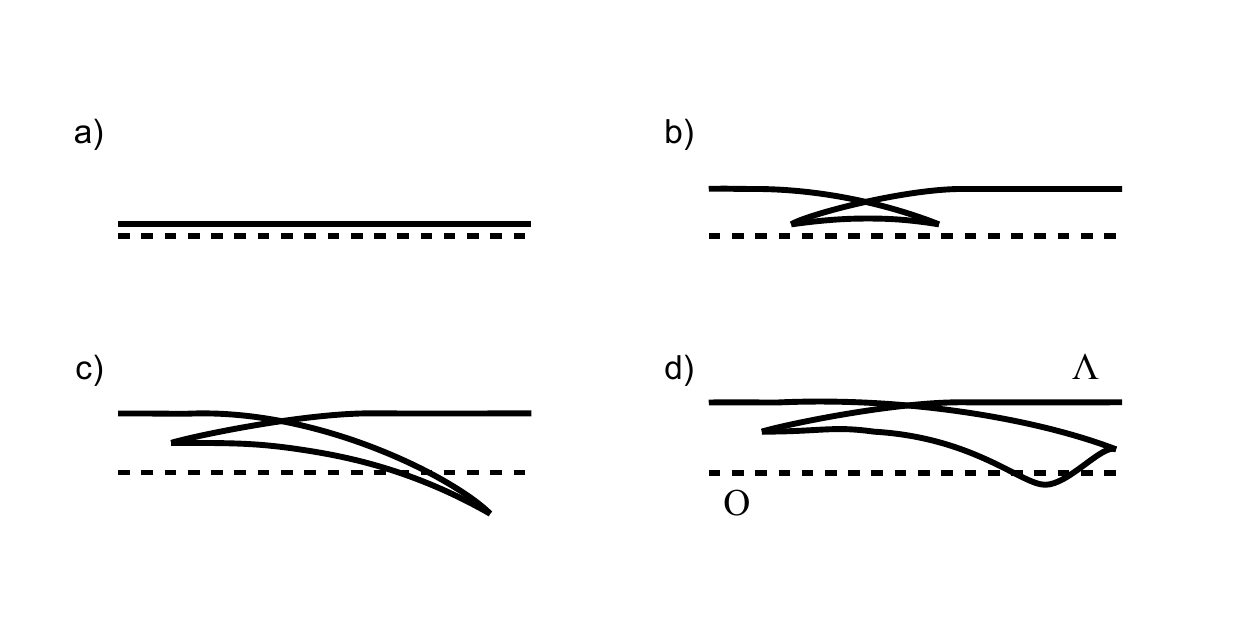}
\caption{A negative critical value from a positive isotopy.} 
\label{PosIso}
\end{figure}

The Legendrian link $(\mathrm{O},\Lambda)$ is a d\'ej\`a vu link in~$\Jet^1(S^1)$
for a purely topological reason. Indeed, suppose not. Then $\Lambda$ is 
Legendrian isotopic to $j^1(1)$ in the complement of $\mathrm{O}$ by Corollary~\ref{NonDjV}.
The two knots are however not even homotopic there because the winding number 
of $\Lambda$ around $\mathrm{O}$ defined as the degree of the projection to the $(u,p)$-plane 
minus the origin is~$\pm 1$. 

\begin{figure}[ht]
\centering
\captionsetup{margin=-1cm}
\includegraphics[scale=0.6]{./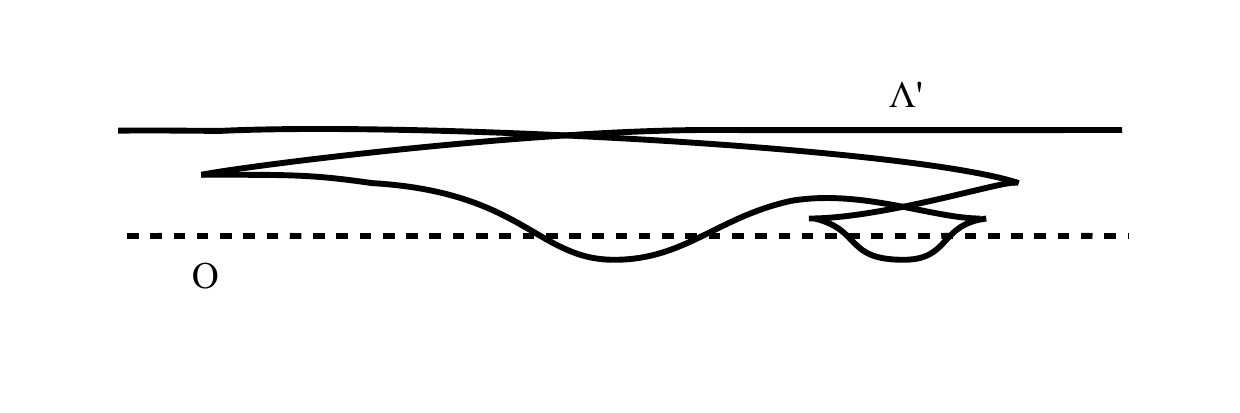}
\caption{D\'ej\`a vu with zero winding.} 
\label{PosIsoDeg0}
\end{figure}

The construction of $\Lambda$ can be modified to make its winding number 
around the zero section vanish. It is enough to create a pair of cusps 
during the isotopy from (c) to~(d), see Fig.~\ref{PosIsoDeg0}. 
The link $(\mathrm{O},\Lambda')$ is d\'ej\`a vu by Proposition~\ref{DjVdimn}
but one can also show that it is not even smoothly isotopic to $(\mathrm{O}, j^1(1))$. 

These two examples illustrate the following general fact based on
the results of Ding and Geiges~\cite{DG} and similar to~\cite[Theorem B]{ChNe1}.

\begin{prop}
\label{DjVS1Top}
A Legendrian link $(\Lambda_1,\Lambda_2)$ in $\Jet^1(S^1)$ such that its components 
are Legendrian isotopic to the zero section and $\Lambda_1\llcurly \Lambda_2$ 
is a d\'ej\`a vu link if and only if it is not smoothly isotopic to~$(\mathrm{O}, j^1(1))$.
\end{prop}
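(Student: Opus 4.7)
I plan to prove the biconditional via its two contrapositives. Under the standing hypothesis $\Lambda_1\llcurly\Lambda_2$, being non d\'ej\`a vu is equivalent to the existence of an embedded positive Legendrian isotopy from $\Lambda_1$ to $\Lambda_2$, so the task reduces to showing that this is equivalent to $(\Lambda_1,\Lambda_2)$ being smoothly isotopic to $(\mathrm{O},j^1(1))$.

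The direction ``non d\'ej\`a vu $\Rightarrow$ smoothly isotopic to $(\mathrm{O},j^1(1))$'' is the immediate one and follows from Corollary~\ref{NonDjV}. That corollary gives a Legendrian isotopy fixing $\Lambda_1$ between $(\Lambda_1,\Lambda_2)$ and the primitive link $(\Lambda_1,R_\epsilon\Lambda_1)$, where $R_\epsilon$ denotes a small-time Reeb shift of the standard contact form on $\Jet^1(S^1)$. Extending a Legendrian isotopy from $\Lambda_1$ to $\mathrm{O}$ to an ambient contact isotopy (via the Legendrian isotopy extension theorem) and applying it to both components identifies the link with $(\mathrm{O},j^1(\epsilon))$, which is smoothly (in fact Legendrian) isotopic to $(\mathrm{O},j^1(1))$ by linear interpolation of the fibre coordinate.

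For the reverse direction, the strategy is to upgrade the given smooth link isotopy to a Legendrian one using the coarse classification of Legendrian knots of Ding and Geiges~\cite{DG}. Concretely, I would first apply a Legendrian isotopy of one component to arrange $\Lambda_1=\mathrm{O}$, reducing the problem to classifying the remaining component $\Lambda_2$ up to Legendrian isotopy in the complement of $\mathrm{O}$. The given data is that $\Lambda_2$ is smoothly isotopic to $j^1(1)$ in $\Jet^1(S^1)\setminus\mathrm{O}$ and Legendrian isotopic to $\mathrm{O}$ in $\Jet^1(S^1)$, so in particular its classical Legendrian invariants agree with those of $j^1(1)$. The Ding--Geiges-type classification then yields a Legendrian isotopy of $(\mathrm{O},\Lambda_2)$ to $(\mathrm{O},j^1(1))$. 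Since $(\mathrm{O},j^1(1))$ manifestly admits the embedded positive isotopy $\{j^1(t)\}_{t\in[0,1]}$, transporting along the ambient contactomorphism provided by the Legendrian isotopy extension theorem produces an embedded positive isotopy for $(\Lambda_1,\Lambda_2)$.

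The main obstacle lies in extracting the required link-level statement from~\cite{DG}, whose results are phrased for Legendrian knots. One must verify that their classification, applied in the open contact manifold $\Jet^1(S^1)\setminus\mathrm{O}$ while tracking the Legendrian isotopy class in the ambient $\Jet^1(S^1)$, is indeed sharp enough to conclude Legendrian isotopy of the pair. This is the analogue in our one-dimensional setting of \cite[Theorem B]{ChNe1}, to which the paragraph preceding the proposition explicitly alludes.
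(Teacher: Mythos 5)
Your overall architecture coincides with the paper's: one direction is Corollary~\ref{NonDjV} (a non d\'ej\`a vu link with $\Lambda_1\llcurly\Lambda_2$ is even Legendrian isotopic to $(\mathrm{O},j^1(1))$), and the other direction rests on the Ding--Geiges classification. However, in the second direction there is a genuine gap at the step ``the Ding--Geiges-type classification then yields a Legendrian isotopy of $(\mathrm{O},\Lambda_2)$ to $(\mathrm{O},j^1(1))$.'' What the classification actually gives (and what the paper cites) is that a link with both components Legendrian isotopic to the zero section is smoothly isotopic to $(\mathrm{O},j^1(1))$ if and only if it is Legendrian isotopic to $(\mathrm{O},j^1(1))$ \emph{or} to $(j^1(1),\mathrm{O})$. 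These two ordered links are smoothly isotopic to each other but not Legendrian isotopic, and your appeal to ``classical Legendrian invariants'' cannot separate them: the components $j^1(1)$ and $j^1(-1)$ have identical classical invariants, the difference being purely in the relative position of the two components. If your argument landed in the class of $(j^1(1),\mathrm{O})$, the transported isotopy $\{j^1(t)\}$ would run from $\Lambda_2$ to $\Lambda_1$, not the other way, and you would not obtain the embedded positive isotopy you need.

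The missing step is to rule out the second class, and this is exactly where the standing hypothesis $\Lambda_1\llcurly\Lambda_2$ must be used a second time: if $(\Lambda_1,\Lambda_2)$ were Legendrian isotopic to $(j^1(1),\mathrm{O})$, then transporting $\llcurly$ by the ambient contact isotopy would give $j^1(1)\llcurly\mathrm{O}$, while obviously $\mathrm{O}\llcurly j^1(1)$; this contradicts the orderability of the Legendrian isotopy class of the zero section of $\Jet^1(S^1)$ \cite{ChNe1,CFP}. With that exclusion added, your argument closes and matches the paper's proof. As a minor point, your stated ``main obstacle'' --- that \cite{DG} is phrased for knots rather than links --- is not the real issue: the cited result is already a link classification; the issue is the two-fold ambiguity in that classification just described.
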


\begin{proof}
The `if' part follows from the definitions because as observed above 
a non d\'ej\`a vu link is even Legendrian isotopic to $(\mathrm{O}, j^1(1))$ 
by Corollary~\ref{NonDjV}.

A Legendrian link in $\Jet^1(S^1)$ such that its components are Legendrian isotopic
to the zero section is smoothly isotopic to $(\mathrm{O}, j^1(1))$
if and only if it is Legendrian isotopic to either $(\mathrm{O}, j^1(1))$
or $(j^1(1),\mathrm{O})$ by the main result of~\cite{DG}.

However, $(\Lambda_1,\Lambda_2)$ cannot be Legendrian isotopic to $(j^1(1),\mathrm{O})$
because $j^1(1)\ggcurly\mathrm{O}$ and the class of the zero section is orderable \cite{ChNe1,CFP}.
So if $(\Lambda_1,\Lambda_2)$ is smoothly isotopic to $(\mathrm{O}, j^1(1))$,
then it is Legendrian isotopic to it and is not d\'ej\`a vu,
which proves the `only if' part. 
\end{proof} 

This topological characterisation of condition (DjV) in the definition of 
d\'ej\`a vu links is specific to dimension~$3$, see Example~\ref{SmVsLeg}.


\subsection{D\'ej\`a vu Legendrian links in $\Jet^1(L)$}
\label{EmbJet}
\aftersubsec
In order to construct d\'ej\`a vu links in $\Jet^1(L)$
for an arbitrary closed manifold $L$ of dimension $n$, 
we `thicken' a given Legendrian in $\Jet^1(\R)$ 
to a similar Legendrian in $\Jet^1(\R^n)$ using 
the Legendrian suspension construction~\cite[\S 3.3]{ChNe3}.
We will only consider Legendrians in $\Jet^1(\R^n)$
that are equal to 1-jets of functions outside of
a compact set and Legendrian isotopies within
this class.
  
Let $\Lambda\subset\Jet^1(\R)$ be a Legendrian properly
isotopic to the zero section and equal to the 1-jet of
a bounded positive function outside of a compact subset.
Choose a positive Legendrian isotopy $\{\Lambda_t\}_{t\in [0,1]}$
in $\Jet^1(\R)$ such that $\Lambda_0=\Lambda$
and $\Lambda_1=j^1(C)$ for a positive constant~$C$.
(Such an isotopy can be constructed by taking any isotopy
from $\Lambda$ to $\mathrm{O}$ and composing it with 
appropriate positive shifts in the $u$-direction, see e.g.~\cite[\S 4]{CS}.)
Let $\chi:[0,+\infty)\to [0,1]$ be a smooth function such that
$$
\chi(t)=\left\{
\begin{array}{rl}
t& \text{ for } t<\eps\\
1&  \text{ for }t\ge 1-\eps
\end{array}
\right.
\quad\text{ and }\quad \chi'(t)>0 \text{ for } t< 1-\eps
$$
and consider the Legendrian family $\{\Lambda_{\chi(\|x\|^2)}\}_{x\in\R^{n-1}}$
with base $\R^{n-1}$, see~\cite[\S 3.1]{ChNe3}.
(Here $\|x\|$ denotes the Euclidean norm of $x\in\R^{n-1}$.) 
This family is transverse on $0<\|x\|^2<1-\eps$ and constant on $\|x\|^2\ge 1-\eps$.
Its Legendrian suspension is a Legendrian submanifold 
$$
\wt{\Lambda}\subset \Jet^1(\R^n)=\Jet^1(\R)\times T^*\R^{n-1}
$$
with the following properties:
\begin{enumerate}
\item $\wt{\Lambda}$ is the 1-jet of a positive function outside of a compact set~$K$.
\item Critical points of $\wt{\Lambda}$ in $K$ are of the form $((q,0),0,u)\in\Jet^1(\R^n)$,
where $(q,0,u)\in\Jet^1(\R)$ is a critical point of $\Lambda$.
\item If $\mathrm{O}\llcurly\Lambda$, then $\mathrm{O}\llcurly\wt{\Lambda}$ .
\end{enumerate}
By property~(1), $\wt{\Lambda}$ can be completed to a Legendrian in $\Jet^1(L)$ 
for any $n$-dimensional closed manifold $L$ so that no additional critical 
points in $\{u\le 0\}$ are created and property~(3) is preserved.

\begin{prop}
\label{DjVdimn}
Let $\Lambda$ be a Legendrian in $\Jet^1(\R)$ such that $\mathrm{O}\llcurly\Lambda$
and $\Lambda$ has $\kappa\ge 1$ transverse critical points in $\{u\le 0\}$ with the same
negative critical value. {\rm (\/}For instance, one may take the Legendrian $\Lambda'$
in Fig.~{\rm \ref{PosIsoDeg0}} and $\kappa=2$.{\rm )\/} 
Then $(\mathrm{O},\wt{\Lambda})$ is a d\'ej\`a vu link in~$\Jet^1(L)$.
\end{prop}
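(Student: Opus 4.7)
Condition~(P) is immediate from property~(3) of the construction, so the task is to establish~(DjV). The plan is a contradiction argument using quadratic at infinity generating functions and the stability Lemma~\ref{stability}. Assume there is an embedded positive Legendrian isotopy $\{\Lambda_t\}_{t\in[0,1]}$ from $\mathrm{O}$ to $\wt{\Lambda}$ and apply Chekanov's theorem to produce a smooth family of quadratic at infinity generating functions $S_t=\sigma_t+Q$ with $\sigma_0\equiv 0$ and $S_1$ generating $\wt{\Lambda}$. The crucial consequence of embeddedness is that $\Lambda_t\cap\mathrm{O}=\varnothing$ for every $t>0$, i.e., $0$ is not a critical value of any $S_t$ with $t>0$.

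I would then track the invariant
$$
D_t:=\dim \HH_*(S_t^0,S_t^{-\infty};\bbk)
$$
at two carefully chosen times. For sufficiently small $t_0>0$, the Legendrian $\Lambda_{t_0}$ lies in a Weinstein tubular neighbourhood of $\mathrm{O}$ and coincides with $j^1(f_{t_0})$ for a smooth function $f_{t_0}:L\to\R$; Corollary~5.4 of~\cite{ChNe1} applied to the positive sub-isotopy on $[0,t_0]$ gives $f_{t_0}\ge 0$, and combined with $\Lambda_{t_0}\cap\mathrm{O}=\varnothing$ this forces $f_{t_0}>0$, so every critical value of $S_{t_0}$ is strictly positive and $D_{t_0}=0$. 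At $t=1$, property~(2) of the suspension construction together with the extension to $\Jet^1(L)$, which creates no additional critical points in $\{u\le 0\}$, ensures that the only critical values of $S_1$ in $(-\infty,0]$ are the common value $c_0<0$ shared by the $\kappa$ transverse critical points; standard Morse theory for sublevel sets then yields $D_1=\kappa\ge 1$.

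To finish, I would apply Lemma~\ref{stability} on $[t_0,1]$ with $a=0$: since $0$ is a regular value of $S_t$ throughout this interval, $D_t$ is constant there, contradicting $D_{t_0}=0\ne\kappa=D_1$. The main technical point is establishing $D_{t_0}=0$, which hinges on the positivity of $f_{t_0}$ and ultimately on the orderability of the zero section in $\Jet^1(L)$. Note finally that the $\kappa$ bars of $S_1$ born at $c_0<0$ will typically be finite, ending at positive critical values of $S_1$, so they are invisible to the spectral invariants $c_\beta$ from Remark~\ref{spectr}; this matches the introduction's comment that the argument genuinely requires the non-persistent finite bars of the barcode.
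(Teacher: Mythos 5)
Your proof is correct and follows essentially the same route as the paper: compare $\dim\HH_*(S_t^0,S_t^{-\infty};\Z/2)$ for small $t$ (where it vanishes because the Legendrians are $1$-jets of positive functions) and at $t=1$ (where it equals $\kappa$), then invoke Lemma~\ref{stability} with $a=0$. One minor remark: the strict positivity of $f_{t_0}$ for small $t_0$ follows directly from the positivity of the isotopy at $t=0$, so the appeal to \cite[Corollary~5.4]{ChNe1} and to orderability is not really needed --- indeed the paper points out that positivity of the isotopy is used only for small $t>0$.
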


\begin{proof}
Any quadratic at infinity generating function $S$ for $\wt{\Lambda}$
will have $\kappa$ Morse critical points in $\{S\le 0\}$
with the same negative critical value. 
Therefore, the relative homology 
$\HH_*(S^0,S^{-\infty};\Z/2)$ will have $\kappa$ 
independent generators (in the degrees equal
to the indices of those critical points).

On the other hand, let $\{S_t\}_{t\in [0,1]}$ be a family 
of quadratic at infinity generating functions for 
a positive isotopy from $\mathrm{O}$ to $\wt{\Lambda}$. 
For small~$t$, the Legendrians in the isotopy
are 1-jets of positive functions and hence
all critical values of $S_t$ are positive 
and $\HH_*(S_t^0,S_t^{-\infty};\Z/2)=0$. 
So by Lemma~\ref{stability}, zero must be a critical
value of $S_\tau$ for some $\tau>0$ and the corresponding
Legendrian intersects $\mathrm{O}$, which shows that
the isotopy is not embedded.
\end{proof}

The argument in the proof uses the positivity of the
embedded isotopy only for small $t>0$. This agrees
with Lemma~\ref{ExistEmb}.

Spectral invariants are increasing along positive isotopies 
(see \cite[Proposition~2]{CFP} or \cite[Lemma~5.2]{ChNe1}) and hence cannot be used 
to detect the intersection of such an isotopy with $\mathrm{O}$. 
In other words, the image of the homomorphism 
$\HH_*(S^0,S^{-\infty};\bbk)\to \HH_*(L\times\R^N,S^{-\infty};\bbk)$
is trivial and therefore to invoke Lemma~\ref{stability}
we need to know that the kernel of this homomorphism is non-trivial.
This general principle may be stated in terms of 
the {\it finite bars\/} in the barcode of a generating function.

\begin{prop}
Let $\Lambda\subset\Jet^1(L)$ be a Legendrian submanifold
such that $\mathrm{O}\llcurly\Lambda$ and $\mathrm{O}\cap\Lambda=\varnothing$.
If for some {\rm (\/}and then any\/{\rm )} quadratic at infinity
generating Morse function of $\Lambda$ there is a finite bar
in its barcode containing $0\in\R$, then $(\mathrm{O},\Lambda)$
is a d\'ej\`a vu link.
\end{prop}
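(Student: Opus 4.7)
The plan is to argue by contradiction, adapting the strategy from the proof of Proposition~\ref{DjVdimn}. Suppose $(\mathrm{O},\Lambda)$ is not d\'ej\`a vu; since $\mathrm{O}\llcurly\Lambda$, there would then exist an embedded positive Legendrian isotopy $\{\Lambda_t\}_{t\in[0,1]}$ with $\Lambda_0=\mathrm{O}$ and $\Lambda_1=\Lambda$. I would invoke Chekanov's theorem to produce a smooth family $\{S_t=\sigma_t+Q\}$ of quadratic at infinity generating functions with $\sigma_0\equiv 0$, and use the Viterbo--Th\'eret uniqueness theorem to arrange, after a stabilisation and a fibrewise diffeomorphism (neither of which changes the barcode), that $S_1$ is the specified Morse generating function of~$\Lambda$.

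Embeddedness means $\Lambda_t\cap\mathrm{O}=\varnothing$ for every $t\in(0,1]$, so by the correspondence between critical points of $S_t$ at critical value zero and intersections $\Lambda_t\cap\mathrm{O}$, the value $0$ is regular for $S_t$ throughout $(0,1]$. For $t$ small enough, positivity of the isotopy at $t=0$ forces $\Lambda_t$ to be the $1$-jet of a strictly positive function $f_t$ on~$L$, as in the proof of Lemma~\ref{ExistEmb}. Every critical value of $S_t$ then coincides with the $u$-coordinate $f_t(q)>0$ of a corresponding intersection of $\Lambda_t$ with $\{p=0\}$, so $S_t$ has no critical value in $(-\infty,0]$ and a gradient-flow deformation retract yields $\HH_*(S_t^0,S_t^{-\infty};\bbk)=0$ for all small $t>0$.

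Applying Lemma~\ref{stability} to $\{S_t\}_{t\in[t_0,1]}$ with $a=0$ for a sufficiently small $t_0>0$ then shows that $\HH_*(S_t^0,S_t^{-\infty};\bbk)$ is independent of $t$, whence $\HH_*(S_1^0,S_1^{-\infty};\bbk)=0$. On the other hand, the assumed finite bar $[a,b)$ with $a\le 0<b$ in the barcode of $S_1$ supplies a non-trivial generator of this group by the persistence-to-homology dictionary recalled in Remark~\ref{pers}, contradicting the previous sentence. The main obstacle is largely bookkeeping -- aligning the family $\{S_t\}$ with the specified Morse $S_1$ via Viterbo--Th\'eret and confirming that $0$ is regular for every $S_t$ with $t>0$. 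It is also worth noting that \emph{infinite} bars cannot replace the hypothesis here: along a positive isotopy starting at $\mathrm{O}$ one has $c_\beta(S_t)\ge c_\beta(S_0)=0$, and since $0$ is not a critical value of $S_t$ for $t\in(0,1]$ the spectral invariants $c_\beta(S_1)$ are strictly positive, so no infinite bar of $S_1$ contains~$0$.
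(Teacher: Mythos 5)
Your argument is correct and is essentially the paper's intended one: the paper states this proposition without a separate proof, presenting it as a direct generalisation of the argument for Proposition~\ref{DjVdimn}, and your proof is exactly that argument (vanishing of $\HH_*(S_t^0,S_t^{-\infty};\bbk)$ for small $t$ along a hypothetical embedded positive isotopy, regularity of the value $0$ for $t>0$, Lemma~\ref{stability}, and the non-vanishing forced by a bar containing $0$). Your closing observation that infinite bars can never contain $0$ under the hypotheses also matches the paper's remark that spectral invariants cannot detect such intersections.
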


\begin{exm}[Smooth \textit{vs}\/ Legendrian links]
\label{SmVsLeg}
For $n\ge 2$ and $L=S^n$, a d\'ej\`a vu link $(\mathrm{O},\Lambda)$
{\it can\/} be smoothly isotopic to $(\mathrm{O},j^1(1))$ 
by an isotopy fixing $\mathrm{O}$.
The complement to the zero section in $\Jet^1(S^n)$ 
is diffeomorphic to $S^n\times (\R^{n+1}-\{0\})$
by the hodograph contactomorphism discussed in 
\S\ref{EmbST}. Taking $n\ge 2$, $k=0$ and $m=2n+1>2n-k$
in part~(b) of Haefliger's Th\'eor\`eme d'existence~\cite[p.~47]{H}, 
we see that embedded $n$-spheres in $S^n\times (\R^{n+1}-\{0\})$
are isotopic if and only if they are homotopic,
which can be inferred from the degrees of their projections to
$S^n$ and $\R^{n+1}-\{0\}$. For instance, $j^1(1)$ is
smoothly isotopic to the Legendrian $\wt{\Lambda'}\subset\Jet^1(S^n)$
obtained from the non-winding Legendrian $\Lambda'\subset\Jet^1(\R)$
shown in Fig.~\ref{PosIsoDeg0}. So the links $(\mathrm{O}, \wt{\Lambda'})$
and $(\mathrm{O},j^1(1))$ are smoothly isotopic in $\Jet^1(S^n)$ by an isotopy fixing~$\mathrm{O}$.
This example is similar to the example of smoothly 
unlinked but Legendrian linked $2$-spheres in~\cite[\S 6]{NT}.
\end{exm}

\subsection{D\'ej\`a vu Legendrian links in $ST^*\R^n$}
\label{EmbST}
\aftersubsec
The 1-jet bundle of the $(n-1)$-sphere is contactomorphic
to the co-sphere bundle of $\R^n$. Explicitly, 
let $\langle\cdot,\cdot\rangle$ denote the standard scalar product on $\R^n$
and let $S^{n-1}\subset\R^n$ be the unit sphere.
The map
$$
\R^n\times S^{n-1}\ni (x,q)\longmapsto\langle q,\cdot \rangle \in ST_x^*\R^n
$$
is a trivialisation of $ST^*\R^n$.
The {\it hodograph contactomorphism\/} is defined by the formula
$$
ST^*\R^n\ni(x,q) \longmapsto
(q,\langle x,\cdot \rangle|_{T_qS^{n-1}}, \langle x,q\rangle)\in\Jet^1(S^{n-1})
$$
The fibre of $ST^*\R^n$ over the origin is mapped to the zero section.

\begin{cor}
Legendrian d\'ej\`a vu links exist in the Legendrian isotopy class
of the fibre of $ST^*\R^n$.
\end{cor}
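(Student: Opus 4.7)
The plan is to pull back the construction of Proposition~\ref{DjVdimn} through the hodograph contactomorphism. Assume $n\ge 2$, so that $S^{n-1}$ has positive dimension and the co-sphere bundle $ST^*\R^n$ carries genuine Legendrian links. The hodograph map displayed immediately before the corollary is a contactomorphism $\Phi:ST^*\R^n\to\Jet^1(S^{n-1})$ that sends the fibre $ST^*_0\R^n$ over the origin to the zero section $\mathrm{O}\subset\Jet^1(S^{n-1})$. Since $\Phi$ is a global contactomorphism, it transports Legendrian isotopy classes, the relations $\llcurly$ and $\lle$, and the properties (P) and (DjV) back and forth.

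With $L=S^{n-1}$, I would apply Proposition~\ref{DjVdimn} directly, taking as the input Legendrian in $\Jet^1(\R)$ any $\Lambda$ satisfying its hypotheses, for instance the non-winding $\Lambda'$ from Fig.~\ref{PosIsoDeg0} with $\kappa=2$. The proposition produces a d\'ej\`a vu link $(\mathrm{O},\wt{\Lambda})$ in $\Jet^1(S^{n-1})$; here $\wt{\Lambda}$ is obtained by the suspension construction of~\S\ref{EmbJet} and is completed to a Legendrian in $\Jet^1(S^{n-1})$ using property~(1) of the suspension. Moreover $\mathrm{O}\llcurly\wt{\Lambda}$ by property~(3), so in particular the two components of the link lie in a common Legendrian isotopy class, namely that of the zero section.

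Pulling back through $\Phi^{-1}$ then gives the link
$$
\bigl(\Phi^{-1}(\mathrm{O}),\Phi^{-1}(\wt{\Lambda})\bigr)=\bigl(ST^*_0\R^n,\Phi^{-1}(\wt{\Lambda})\bigr)
$$
in $ST^*\R^n$. Both of its components are Legendrian isotopic to the fibre $ST^*_0\R^n$ (the isotopy class of the fibre does not depend on the basepoint, and under $\Phi$ it coincides with the class of the zero section). Since $\Phi$ preserves positivity of isotopies, condition~(P) is inherited from $(\mathrm{O},\wt{\Lambda})$, and an embedded positive isotopy between $\Phi^{-1}(\mathrm{O})$ and $\Phi^{-1}(\wt{\Lambda})$ would push forward via $\Phi$ to an embedded positive isotopy between $\mathrm{O}$ and $\wt{\Lambda}$, contradicting condition~(DjV) in $\Jet^1(S^{n-1})$. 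Thus condition~(DjV) holds as well, and the pulled-back link is the desired d\'ej\`a vu link in the isotopy class of the fibre.

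There is no substantive obstacle here: all the work has already been done in Proposition~\ref{DjVdimn}, and the only thing to check is that the hodograph contactomorphism is compatible with every ingredient in the definition of a d\'ej\`a vu link. The mildly delicate point is confirming that $\wt{\Lambda}$ and $\mathrm{O}$ (equivalently, their $\Phi^{-1}$-images and the fibre) lie in the same Legendrian isotopy class, which I would deduce from the existence of the positive isotopy $\mathrm{O}\llcurly\wt{\Lambda}$ provided by item~(3) of the suspension construction.
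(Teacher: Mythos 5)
Your proposal is correct and follows exactly the route the paper intends: the corollary is stated as an immediate consequence of Proposition~\ref{DjVdimn} applied with $L=S^{n-1}$, transported to $ST^*\R^n$ by the hodograph contactomorphism, which sends the zero section to the fibre over the origin and preserves all the ingredients of the d\'ej\`a vu condition. Your additional checks (that both components land in the fibre class, and that (P) and (DjV) are contactomorphism-invariant) are the right ones and match the paper's implicit argument.
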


\begin{figure}[ht]
\centering
\captionsetup{margin=-1cm}
\includegraphics[scale=0.6]{./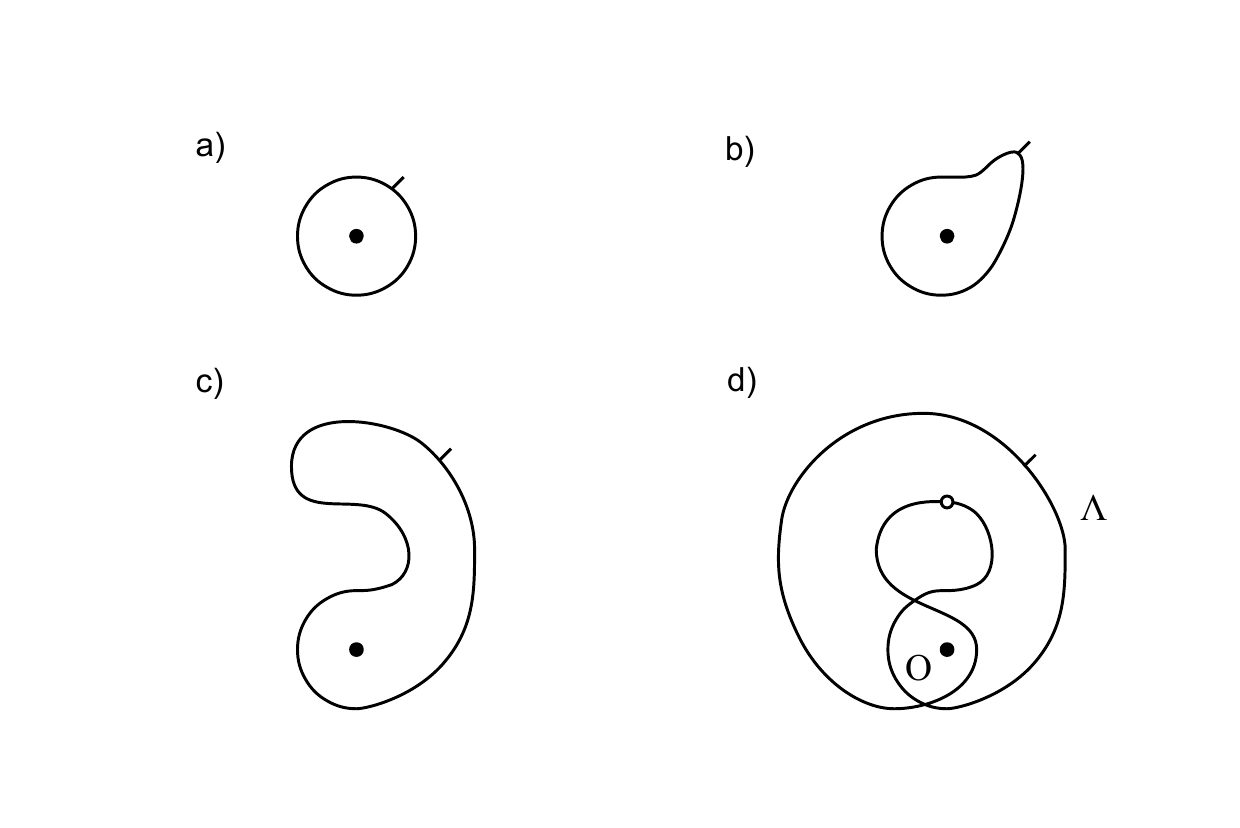}
\caption{D\'ej\`a vu Legendrian link in $ST^*\R^2$.} 
\label{PosIsoST}
\end{figure}

The hodograph pre-image in $ST^*\R^2$ of the Legendrian isotopy in Fig.~\ref{PosIso} 
is shown in Fig.~\ref{PosIsoST}. Here again we are drawing wavefronts,
which in this case are co-oriented projections of Legendrians to~$\R^2$.
The black dot represents the fibre over the origin. The critical point 
with negative $u$ in $\Jet^1(S^1)$ on Fig.~\ref{PosIso}(d) corresponds 
to the white dot on the wavefront of $\Lambda$ in~$\R^2$.
At this point, the co-orientation normal $q\in S^1$ 
is exactly opposite to the vector $x\in\R^2$ from the origin.
In this representation it is even more clear that the isotopy 
to $\Lambda$ is positive because the wavefronts in $\R^2$ 
are moving in the direction of their co-orientation, see~\cite[Example 2.2]{ChNe1}. 

\begin{figure}[ht]
\centering
\captionsetup{margin=-1cm}
\includegraphics[scale=0.6]{./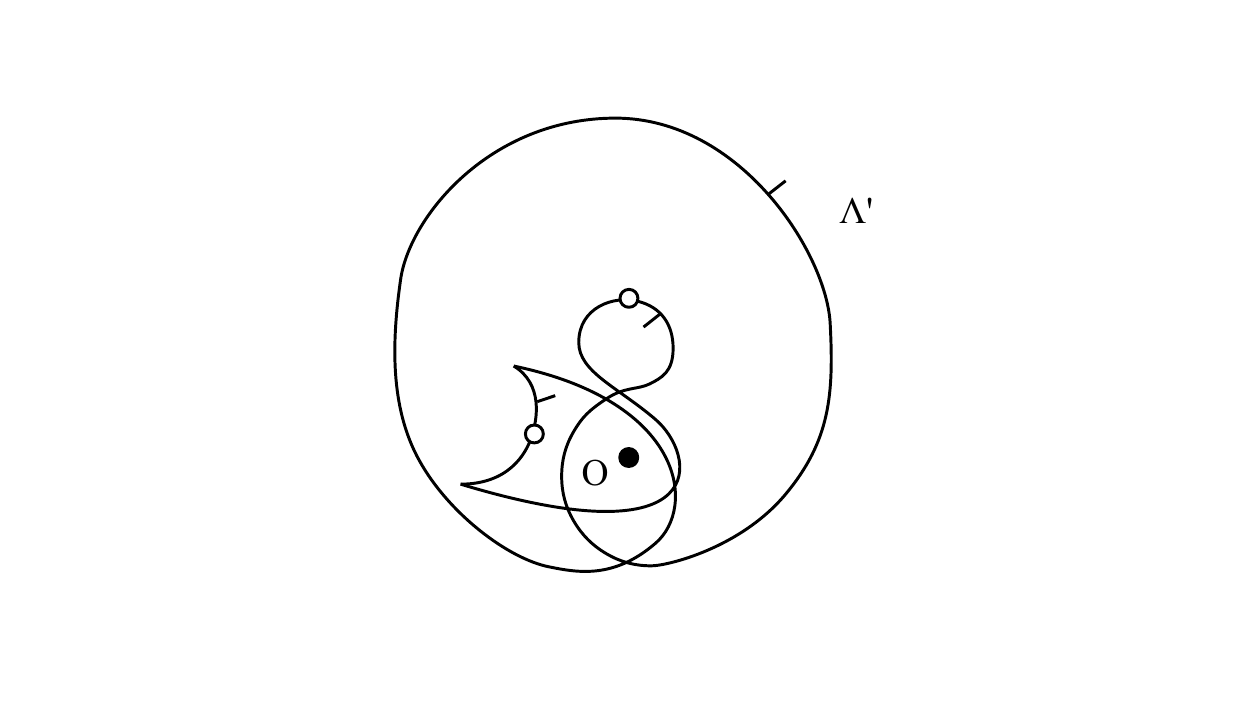}
\caption{D\'ej\`a vu Legendrian link without winding in $ST^*\R^{2n+1}$.} 
\label{PosIsoSTDeg0}
\end{figure}

Legendrians with the same properties in $ST^*\R^n$ for $n\ge 3$
can be obtained by applying the `finger move' on Fig.~\ref{PosIsoST} 
to the Legendrian whose wavefront in $\R^n$ is the outwardly co-oriented 
$(n-1)$-sphere around the origin. ({\it Warning\/}: These Legendrians 
will {\it not\/} be the hodograph images of the thickened Legendrians 
in $\Jet^1(S^{n-1})$ constructed by the suspension trick in \S\ref{EmbJet}.) 
Creating an additional $(n-2)$-sphere of cusps as on Fig.~\ref{PosIsoSTDeg0}
gives a d\'ej\`a vu link which for {\it odd\/} $n\ge 3$ is 
{\it smoothly\/} isotopic to the {\it non\/} d\'ej\`a vu link on Fig.~\ref{PosIsoST}(a) 
by the argument in Example~\ref{SmVsLeg}. For even $n$, one needs to create
two such `swallowtails', cf.~\cite[p.~257]{NT}.

\section{Lorentz geometry}

\subsection{Spacetimes and spaces of null geodesics}
\label{spacetimes}
\aftersubsec
A {\it spacetime\/} is a connected time-oriented Lorentz manifold $(\XX,\langle\text{ },\!\text{ }\rangle)$.
The Lorentz metric is taken to be of signature $(+,-,\dots,-)$ so that $\langle v,v\rangle>0$
for timelike vectors and $\langle v,v\rangle<0$ for spacelike vectors.
The time-orientation is a continuous choice of the future hemicone
$$
C^{\uparrow}_x\subset \{v\in T_x\XX\mid \langle v,v\rangle\ge 0, v\ne 0\}
$$
in the cone of non-spacelike vectors at each point~$x\in\XX$.
The vectors in $C^\uparrow_x$ are called {\it future-pointing}.
A piecewise smooth curve in $\XX$ is {\it future-directed\/}
if all its tangent vectors are future-pointing.

The {\it causality relation\/} $\le$  on $\XX$ is defined
by setting $x\le y$ if either $x=y$ or there is a future-directed
curve connecting $x$ to~$y$. 
The {\it chronology relation\/} $\ll$ is defined similarly
by writing $x\ll y$ if there is a future-directed timelike curve 
connecting $x$ to~$y$.

$\XX$ is {\it causal\/} if it does not contain closed 
future-directed curves. (This is equivalent to requiring
that $\le$ is a partial order.) A causal spacetime $\XX$ 
is {\it globally hyperbolic\/} if the causal interval $\{z\in\XX\mid x\le z\le y\}$ 
is compact for every~$x,y\in\XX$, see~\cite{BeSa2}
for the equivalence of this definition and the more classical one~\cite[Definition 5.24]{Pe} or~\cite[\S 6.6]{HaEl}. 
By the smooth splitting theorem~\cite{BeSa1}, 
a globally hyperbolic spacetime is foliated by smooth spacelike 
Cauchy (hyper)surfaces, where a {\it Cauchy surface\/} is a subset 
of a spacetime such that every endless future-directed curve 
intersects it exactly once.

The {\it space of null geodesics\/} of $\XX$ is the set $\mathfrak N_\XX$ 
of equivalence classes of endless future-directed null geodesics 
up to an orientation preserving affine reparametrisation~\cite{Lo2}. 
For a globally hyperbolic $\XX$ of dimension $\ge 3$, this space is a contact manifold 
and to every Cauchy surface $M\subset\XX$ there is associated 
a contactomorphism $\rho_M:\mathfrak N_\XX\overset{\cong}{\longrightarrow} ST^*M$
onto the co-sphere bundle of $M$, see e.g.\ \cite[pp.~252--253]{NT}
or \cite[\S\S 1-2]{ChNe4}.  

The set $\mathfrak S_x\subset\mathfrak N_\XX$ of all null geodesics 
passing through a point $x\in\XX$ is a Legendrian sphere in $\mathfrak N_\XX$
called the {\it sky\/} (or the {\it celestial sphere\/}) of that point.
For any Cauchy surface $M\subset\XX$ and a point $x\in M$, 
$\rho_M(\mathfrak S_x)=ST_x^*M$ and hence all skies are
mapped by $\rho_M$ to the Legendrian isotopy class of the fibre of~$ST^*M$, see \cite[\S 4]{ChNe1}.

The twistor map $x\mapsto\mathfrak S_x$ has the following properties
summarised in this form in~\cite[\S4.2]{ChNe5}.
If $x\le y$, then $\mathfrak S_x\lle\mathfrak S_y$, and if $x\ll y$, then
$\mathfrak S_x\llcurly\mathfrak S_y$, where $\lle$ and $\llcurly$
are the relations on Legendrians introduced in~\S\ref{LegIso}.
Moreover, if the Legendrian isotopy class of the fibre of $ST^*M$
is orderable (for instance, if $M$ is non-compact~\cite{ChNe2}), then 
the converse implications hold as well.

\subsection{D\'ej\`a vu moments}
\label{djvLorentz}
\aftersubsec
Let $\gamma=\gamma(t)$ be a future-directed timelike curve 
in a causal spacetime~$\XX$. A {\it d\'ej\`a vu moment\/} is
a point $x_+=\gamma(t_+)$ such that there exists a point
$x_-=\gamma(t_-)$ with $t_-<t_+$ and a null geodesic 
connecting $x_-$ to~$x_+$. (Causality implies that this null
geodesic will be future-directed.)
The curve $\gamma$ may be thought of as the world line
of an observer who is receiving the same light ray 
at inner time $t_+$ as at the time $t_-$ in the past.

\begin{prop}
\label{djvlinks2moments}
If the skies of two points in a globally hyperbolic spacetime~$\XX$ form a d\'ej\`a vu 
Legendrian link $(\mathfrak S_x,\mathfrak S_y)$, then there are d\'ej\`a vu 
moments on every future-directed timelike curve from $x$ to~$y$. 
\end{prop}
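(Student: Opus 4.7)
The plan is to translate the timelike curve $\gamma:[0,1]\to\XX$ from $x=\gamma(0)$ to $y=\gamma(1)$ into a positive Legendrian isotopy of skies and then extract a d\'ej\`a vu moment from its failure to be embedded. The starting point is the twistor map $z\mapsto\mathfrak S_z$: as recalled in~\S\ref{spacetimes}, the skies of points on a future-directed timelike curve form a Legendrian isotopy, and because $\gamma(t_1)\ll\gamma(t_2)$ for $t_1<t_2$ this isotopy is positive. More precisely, I would verify that the family $\{\mathfrak S_{\gamma(t)}\}_{t\in[0,1]}$ is a smooth Legendrian isotopy whose velocity field is positively transverse to the contact distribution on $\mathfrak N_\XX$; this is essentially the content of the twistor construction in a globally hyperbolic spacetime.

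Next I would invoke the d\'ej\`a vu hypothesis on $(\mathfrak S_x,\mathfrak S_y)$: condition~(DjV) says that no positive Legendrian isotopy connecting $\mathfrak S_x$ to $\mathfrak S_y$ is embedded. Applied to the isotopy $\{\mathfrak S_{\gamma(t)}\}$ constructed in the previous step, this forces this particular positive isotopy to fail to be embedded.

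The core step is then to read off a d\'ej\`a vu moment from the failure of embeddedness. Since positive isotopies are locally embedded (as noted in the introduction), and each individual sky $\mathfrak S_{\gamma(t)}$ is an embedded Legendrian sphere, the failure can occur only through intersections at well-separated times: there must exist $t_-<t_+$ in $[0,1]$ with $\mathfrak S_{\gamma(t_-)}\cap\mathfrak S_{\gamma(t_+)}\neq\varnothing$. A point in this intersection is precisely a null geodesic passing through both $\gamma(t_-)$ and $\gamma(t_+)$, and causality together with $t_-<t_+$ forces it to be future-directed from $\gamma(t_-)$ to $\gamma(t_+)$. By the definition in~\S\ref{djvLorentz}, $\gamma(t_+)$ is a d\'ej\`a vu moment of $\gamma$.

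The only delicate point I expect is justifying cleanly that the twistor map applied to a future-directed timelike curve produces a bona fide positive Legendrian isotopy parametrised by~$t$, rather than merely a $t$-indexed family of skies related pairwise by $\llcurly$. This requires the smooth dependence of the sky on the basepoint (standard for globally hyperbolic spacetimes) and the identification of the pull-back of a contact form on $\mathfrak N_\XX$ with the Lorentzian pairing of $\dot\gamma$ against null directions, which is strictly positive along a timelike curve. Once this is in place, the rest of the argument is a direct unwinding of definitions.
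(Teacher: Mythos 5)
Your proposal is correct and follows the paper's overall strategy: turn the timelike curve $\gamma$ into a positive Legendrian isotopy of skies, apply condition (DjV) to conclude it is not embedded, and read off a null geodesic between two distinct points of $\gamma$ from an intersection of two skies. The one place where you genuinely diverge is the conversion of non-embeddedness into an intersection. You argue directly: a positive isotopy is an immersion, an injective immersion of the compact manifold $L\times[0,1]$ is an embedding, and since each time-slice is embedded the failure of injectivity must pair two \emph{distinct} times $t_-<t_+$; this yields a single d\'ej\`a vu moment at some $\gamma(t_+)$, which suffices for the existential claim. The paper instead applies Lemma~\ref{ExistEmb} in contrapositive form at each endpoint, which localizes the intersections at the endpoint skies: $\mathfrak S_x$ must meet some later sky $\mathfrak S_{\gamma(t_0)}$ and $\mathfrak S_y$ must meet some earlier sky, and since $\mathfrak S_x\cap\mathfrak S_y=\varnothing$ this produces \emph{two} d\'ej\`a vu moments, one of them at $y$ itself; that refinement is what supports the subsequent remark that $\gamma$ must cross both the exponentiated future null cone of $x$ and the exponentiated past null cone of $y$. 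One point to tighten in your version: by the paper's conventions a future-directed timelike curve is only piecewise smooth, so $\{\mathfrak S_{\gamma(t)}\}$ is a priori a piecewise smooth isotopy, positive on its smooth segments; your direct argument applies (DjV) to it verbatim, so you should either restrict to smooth $\gamma$ and smooth the reparametrisation, or route the endpoint analysis through Lemma~\ref{ExistEmb} as the paper does.
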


\begin{proof}
Such a curve $\gamma=\gamma(t)$, $t\in[0,1]$, 
defines a piecewise smooth isotopy of skies $\mathfrak S_{\gamma(t)}$
connecting $\mathfrak S_x$ to $\mathfrak S_y$
that is positive on its smooth segments by~\cite[Proposition 4.3]{ChNe5}.
Since there is no positive embedded isotopy between those skies,
it follows from Lemma~\ref{ExistEmb} that there exist $t_0\in(0,1]$ and $t_1\in[0,1)$
such that $\mathfrak S_x\cap\mathfrak S_{\gamma(t_0)}\ne\varnothing$
and $\mathfrak S_{\gamma(t_1)}\cap\mathfrak S_y\ne\varnothing$.
But this means exactly that there are null geodesics connecting 
$x$ to $\gamma(t_0)$ and $\gamma(t_1)$ to~$y$. The skies of $x$
and $y$ are disjoint, so they are not connected by a null geodesic.
Hence, we get at least two d\'ej\`a vu moments at $\gamma(t_0)$
and $y=\gamma(1)$.
\end{proof} 

\begin{rem}
The proof of Proposition~\ref{djvlinks2moments} shows, in other words, that every future-directed timelike curve from $x$ to $y$ 
must intersect the exponentiated future null cone of $x$ and the exponentiated
past null cone of $y$. 
\end{rem}

D\'ej\`a vu links in $ST^*\R^n$ of the types shown on Fig.~\ref{PosIsoST}(d) and Fig.~\ref{PosIsoSTDeg0} 
appear as links of skies in globally hyperbolic static spacetimes of the form $(\R^n\times\R, -g\oplus dt^2)$,
where $g$ is a `bumpy' Riemannian metric on $\R^n$ creating scattering obstacles. 
More advanced examples of this kind may be derived from~\cite[Fig.~5]{NT}. 

If the Legendrian isotopy class of skies
(i.e.\ the fibre class in $ST^*M$ for a Cauchy surface $M\subset\XX$) is {\it orderable}, 
then the proposition can be strengthened in two ways. 
First, there actually exists 
a future-directed timelike curve from $x$ to $y$ because 
$\mathfrak S_x\llcurly \mathfrak S_y$ implies $x\ll y$.
Secondly, intersections with the exponentiated null cones
exist for {\it every\/} curve connecting $x$ and $y$ 
by Proposition~\ref{DejavuOrdNoEmb}(ii). The first part
of the following example shows that without the orderability
assumption both assertions may be false.
 
\begin{exm}
Let $(M,g)$ be a Riemannian $Y^x_\ell$-manifold, which implies that
the fibre class in $ST^*M$ is {\it not\/} orderable, see \S\ref{yxl}.
The Lorentz direct product $(M\times\R,-g\oplus dt^2)$ is a globally
hyperbolic spacetime.

\smallskip
\noindent
{\bf (i)} If $M$ is not homeomorphic to the sphere, then the skies 
of two points $x=(\underline{x},0)$ and $y=(\underline{y},0)$ 
on the same Cauchy surface $M\times\{0\}$ form a d\'ej\`a vu link by Proposition~\ref{BSnondjv}. 
At the same time, the points are not causally related and can be connected by a (spacelike) curve 
inside $M\times\{0\}$ which does not intersect their exponentiated null cones.

\smallskip
\noindent
{\bf (ii)}  If $M$ is diffeomorphic to the sphere, then $(\mathfrak{S}_x,\mathfrak{S}_{y})$ 
is {\it not\/} a d\'ej\`a vu link. Let us consider the point $y'=(\underline{y},\ell)$.
If $\underline{y}$ is close to $\underline{x}$, then obviously $x\ll y'$. 
Note also that $\mathfrak{S}_{y'}=\mathfrak{S}_y$ because null geodesics in~$\XX$
are of the form $(\beta(s),s)$, where $\beta=\beta(s)$ is a naturally parametrised Riemannian geodesic in $(M,g)$. 
So $(\mathfrak{S}_x,\mathfrak{S}_{y'})$ is {\it not\/} a d\'ej\`a vu link. 
Nevertheless, the conclusion of Proposition~\ref{djvlinks2moments} 
is valid for $x$ and~$y'$ (non-vacuously because $x\ll y'$). To see this, 
note that $x\ll x'=(\underline{x},\ell)$ whereas $y'$ is causally unrelated to $x'$. 
Hence, every curve connecting $x$ and $y'$ intersects the boundary of $I^-(x')=\{z\in\XX\mid z\ll x'\}$. 
In a globally hyperbolic spacetime, this boundary is contained in the 
{\it closed\/} set $J^-(x')=\{z\in\XX\mid z\le x'\}$  
and is covered by null geodesics passing through $x'$ (and hence through~$x$) 
by~\cite[Proposition 3.71]{MS} and \cite[Proposition 2.20]{Pe}. 
It follows that every future-directed curve from $x$ to $y'$ 
must intersect the exponentiated future null cone of~$x$.
Applying the same argument to $I^+(y)=\{z\in\XX\mid y\ll z\}$ 
shows that such a curve must also intersect the exponentiated
past null cone of~$y'$.\qed
\end{exm}

The second part of the preceding example shows that the converse
to Proposition~\ref{djvlinks2moments} need not hold in general.
The example is very special, however, so one may venture the
following (optimistic) conjecture:

\begin{conje}
Let $\XX$ be a globally hyperbolic spacetime such that its Cauchy surface
is {\it not\/} homeomorphic to the sphere. If there are d\'ej\`a vu moments
on every future-directed timelike curve connecting two causally related points $x,y\in\XX$,
then their skies either intersect or form a d\'ej\`a vu Legendrian link
in~$\mathfrak{N}_\XX$.
\end{conje}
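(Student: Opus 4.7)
\smallskip
\noindent\textbf{Proposal.} I would attack the conjecture by contrapositive. Assume $\mathfrak{S}_x\cap\mathfrak{S}_y=\varnothing$ and $(\mathfrak{S}_x,\mathfrak{S}_y)$ is \emph{not} a d\'ej\`a vu link, so by definition there exists an embedded positive Legendrian isotopy $\{\Lambda_t\}_{t\in[0,1]}$ in $\mathfrak{N}_\XX$ from $\mathfrak{S}_x$ to $\mathfrak{S}_y$. The goal is to produce a future-directed timelike curve $\gamma:[0,1]\to\XX$ with $\gamma(0)=x$, $\gamma(1)=y$ such that $\mathfrak{S}_{\gamma(s)}\cap\mathfrak{S}_{\gamma(t)}=\varnothing$ for all $s\ne t$; by the analysis in~\S\ref{djvLorentz} this is exactly the statement that $\gamma$ has no d\'ej\`a vu moments.

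\smallskip
\noindent\textbf{Setup.} The first ingredient is orderability: when a Cauchy surface $M$ of $\XX$ is not homeomorphic to the sphere, Proposition~\ref{BSnondjv} and the orderability results of~\cite{ChNe2,ChNe3} prevent the only known pathologies of the fibre class in $ST^*M\cong\mathfrak{N}_\XX$, and in all cases where this is established the twistor map $\mathfrak{S}:\XX\hookrightarrow\mathrm{Leg}(\mathfrak{N}_\XX)$ becomes injective and the equivalence $x\ll y\iff\mathfrak{S}_x\llcurly\mathfrak{S}_y$ holds (see the end of \S\ref{spacetimes}). Thus an embedded positive isotopy from $\mathfrak{S}_x$ to $\mathfrak{S}_y$ already forces $x\ll y$, so some future-directed timelike curve from $x$ to $y$ exists; the real work is to find one \emph{without} d\'ej\`a vu moments.

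\smallskip
\noindent\textbf{Main strategy.} I would try to convert $\{\Lambda_t\}$ into an isotopy through sky spheres. Choose a fine partition $0=t_0<t_1<\cdots<t_N=1$ so that on each $[t_{k-1},t_k]$ the sub-isotopy $\{\Lambda_t\}$ is $C^1$-close to a fixed sky $\mathfrak{S}_{z_k}$. On each short interval, use a local reconstruction statement in a convex Lorentzian neighbourhood of $z_k$: the skies form a finite-dimensional submanifold of the space of Legendrians, and I conjecture (this is the content to be proved) that a sufficiently small embedded positive Legendrian isotopy with sky endpoints can be homotoped through embedded positive isotopies to an isotopy of the form $t\mapsto\mathfrak{S}_{\gamma_k(t)}$ for a short timelike curve $\gamma_k$. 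Concatenating the $\gamma_k$ and smoothing at the joints would then yield the desired $\gamma$; embeddedness of the global isotopy, together with the injectivity of $\mathfrak{S}$, forces the concatenated sky family $\{\mathfrak{S}_{\gamma(t)}\}$ to be pairwise disjoint, hence $\gamma$ has no d\'ej\`a vu moments by construction.

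\smallskip
\noindent\textbf{Main obstacle.} The hard step is unquestionably the local reconstruction claim just stated. The twistor image $\mathfrak{S}(\XX)$ is a finite-dimensional submanifold of the infinite-dimensional space of Legendrian spheres, so a generic embedded positive isotopy is not a sky isotopy and cannot be made one by a small transverse perturbation. One must use essential geometric rigidity of skies: light rays through $\mathfrak{S}_{z}$ reconstruct a neighbourhood of $z$ via Low-type arguments (cf.~\cite{ChNe5}), and positivity/embeddedness should constrain a nearby Legendrian isotopy enough that the family of tangent hyperplanes to $\Lambda_t$ traces out an actual causal path in $\XX$. If this rigidity principle fails, the conjecture may need to be weakened; if it holds, the remaining steps (concatenation, smoothing, checking that the absence of d\'ej\`a vu pairs survives the homotopy) should be routine applications of Lemma~\ref{ExistEmb} and Corollary~\ref{NonnegEmbPos}. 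It is also worth attempting a pure obstruction-theoretic reformulation — finding a homological obstruction in $\mathfrak{N}_\XX$ that is non-trivial exactly when $M\not\cong S^n$ — as a fallback.
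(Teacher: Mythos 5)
The statement you are addressing is not proved in the paper at all: it is stated as an explicitly ``optimistic'' open conjecture, and the author only discusses consequences that might be used to support or refute it. Your text is likewise not a proof but a strategy sketch, and you correctly identify its central missing ingredient yourself: the ``local reconstruction'' claim that a small embedded positive Legendrian isotopy with sky endpoints can be homotoped, through embedded positive isotopies, to an isotopy of skies. Nothing in the paper or in the cited literature supplies this; as you note, the sky image is a finite-dimensional submanifold of an infinite-dimensional space of Legendrians, so no perturbation or transversality argument is available, and without this step the whole construction does not get off the ground. In addition, even granting that claim, your concatenation step has a genuine logical gap: embeddedness of the original global isotopy $\{\Lambda_t\}$ is a condition relating \emph{all} pairs of time slices, and it is not preserved when you replace each short piece by a homotopic sky isotopy --- intermediate skies produced on the $k$-th segment could perfectly well intersect those produced on the $j$-th segment for $j\ne k$, which would reintroduce d\'ej\`a vu moments. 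Controlling each piece separately does not control these cross-segment intersections.

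A further error is in your Setup: non-sphericity of the Cauchy surface does \emph{not} yield orderability of the fibre class. Any $Y^x_\ell$-manifold $M$ not homeomorphic to the sphere (e.g.\ $\C\mathrm{P}^n$) has a periodic co-geodesic flow, hence a positive Legendrian loop of fibres, hence a \emph{non}-orderable fibre class; this is precisely the setting of part~(i) of the example following Proposition~\ref{djvlinks2moments}, where $\mathfrak S_x\llcurly\mathfrak S_y$ holds for causally \emph{unrelated} points. So the implication ``embedded positive isotopy between skies $\Rightarrow x\ll y$'' cannot be obtained the way you state it. (It is recoverable by a different route: the conjecture assumes $x\le y$, and if moreover the skies are disjoint then $y\notin\p J^+(x)$, which in a globally hyperbolic spacetime is ruled by null geodesics through $x$, so $x\ll y$ follows from Lorentzian causality theory rather than from orderability.) In summary: your proposal is a reasonable research plan, but it contains one false supporting claim, one step that would fail as written, and one admittedly unproven core lemma; it should not be presented as a proof of the conjecture.
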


If this conjecture is true, it will have a number of purely
geometric consequences which may be used to support (or refute) it.
Assume, for instance, that $x$ is connected to $y$ by a future-directed 
timelike curve that does not intersect the exponentiated
future null cone of $x$. Then $\mathfrak{S}_x\llcurly\mathfrak{S}_y$
but the link $(\mathfrak{S}_x,\mathfrak{S}_y)$ is not d\'ej\`a vu
by Lemma~\ref{ExistEmb}. The conjecture would imply
that there exists a (maybe different) future-directed timelike curve from $x$ to~$y$
without d\'ej\`a vu moments and, in particular, not intersecting
the exponentiated past null cone of~$y$.

\end{document}